\newtheorem*{definition}{Definition}
\newtheorem{proposition}{Proposition}
\newtheorem{theorem}{Theorem}
\newtheorem{claim}{Claim}
\newtheorem{corollary}{Corollary}
\newtheorem{lemma}{Lemma}
\newtheorem{question}{Question}
\newtheorem*{conjecture}{Conjecture}
\newcommand{\opgm}{\ensuremath{op^{g,m}}}
\newcommand{\opgmk}{\ensuremath{op^{g,m}(\context)}}
\def\B{\BV}
\def\BO{\mathop{\mbox{\rm B}}}
\title{Rich subcontexts}
\author{Alexandre Albano}
\date{as of July 30}
\begin{document}

\pagenumbering{gobble}
\bibliographystyle{alpha}


\newcommand{\cala}{\ensuremath{\mathcal{A}}}
\newcommand{\calaumpre}{\ensuremath{\mathcal{A}_1^{\chi \neq R}}}
\newcommand{\calaum}{\ensuremath{\mathcal{A}_1}}
\newcommand{\caladoispre}{\ensuremath{\mathcal{A}_2^{\chi \neq R}}}
\newcommand{\caladois}{\ensuremath{\mathcal{A}_2}}
\newcommand{\calanr}{\ensuremath{\mathcal{A}^{\chi \neq R}}}
\newcommand{\calar}{\ensuremath{\mathcal{A}^{\chi = R}}}
\newcommand{\calb}{\ensuremath{\mathcal{B}}}
\newcommand{\calc}{\ensuremath{\mathcal{C}}}
\newcommand{\cald}{\ensuremath{\mathcal{D}}}
\newcommand{\calcnr}{\ensuremath{\mathcal{C}^{\neg R}}}
\newcommand{\calcr}{\ensuremath{\mathcal{C}^R}}
\newcommand{\cale}{\ensuremath{\mathcal{E}}}
\newcommand{\calk}{\ensuremath{\mathcal{K}}}
\newcommand{\caln}{\ensuremath{\mathcal{N}}}
\newcommand{\calr}{\ensuremath{\mathcal{R}}}
\newcommand{\cals}{\ensuremath{\mathcal{S}}}

\newcommand{\calcf}{\ensuremath{\mathcal{K}^*}}

\newcommand{\contextl}{\ensuremath{\mathbb{L}}}

\newcommand*\circled[1]{\tikz[baseline=(char.base)]{
            \node[draw=black, shape=circle,inner sep=0mm, minimum width=3.5mm, line width=0.4pt] (char) {\hspace{0.4mm}\scriptsize \!#1};}}

\newcommand\mic{m^{\circled{$I$}}}
\newcommand\hic{h^{\circled{$I$}}}
\newcommand\gic{g^{\circled{$I$}}}
\newcommand\hjc{h^{\circled{$J$}}}
\newcommand\gjc{g^{\circled{$J$}}}

\newcommand{\cont}{\ensuremath{\mathfrak{C}}}
\def\CN#1{\mathbb{N}^c{#1}}

\maketitle

\section{Introduction}
It is not well understood which effects small changes to a formal
context have on its concept lattice (for the definition of said structure, as well as basics on Formal Concept Analysis, we refer the reader to~\cite{GW1999}). Deleting an object (or an
attribute) may, for example, reduce the number of concepts by up to
50\%. What happens if we delete both, an object and an attribute? The
extreme case is that the number of concepts divides by four. This
indeed happens, for instance, if one deletes an incident object-attribute pair in a contranominal scale. 
Even with the restriction that the deleted pair be non-incident, it is possible 
that the number of concepts gets reduced to one third of the original number. In this paper, we show that there always 
exists an object-attribute pair such that the number of concepts at most halves after its deletion. 
As an application, this result establishes a weak form of a conjecture involving number of concepts 
of a formal context and contranominal scales found as subcontexts.

\section{Motivation}
In what follows, we present two aspects of our main motivation to investigate the question. A few elementary defininitions are introduced and related work is shown as well.

\subsection{Local changes to a formal context and their effects}

Understanding how a lattice changes after some part of its associated context gets modified is of great interest to comprehend how a conceptual system evolves. Quite a few problems of this nature were already posed and solved, but many questions still remain. Solutions to those kind of problems may, for instance, provide insights for lattice drawing algorithms.

Removing an object and a non-incident attribute can lead to a loss of more than 50\% of the concepts. Indeed, consider the standard context of the three element chain (that is, the unique two-by-two reduced Ferrers context). It is clear that the removal of its object which has no attributes, along with the removal of the empty column results in a one-by-one full context, which has only one concept.

For a less trivial example which results even in reduced subcontexts, consider the formal context present in Figure~\ref{fig:context}. Its lattice has $15$ elements and, when object $g$ and attribute $m$ both are removed, a (reduced) sub-context with only 7 concepts remains. In contrast, removing $h$ and $m$ results in a sub-context with 9 concepts (which is reduced as well). 

\begin{figure}[htbp]
  \centering
  \begin{cxt}
    \cxtName{$\context$}                                                   
    \att{$m$}                                                      
    \att{$n$}                                                                  
    \att{$o$}                                                        
    \att{$p$}
    \att{$q$}
    \obj{.xx.x}{$g$}
    \obj{.x.xx}{$h$}
    \obj{xx.x.}{$i$}
    \obj{x.xx.}{$j$}
    \obj{x...x}{$k$}
  \end{cxt} 
  \caption{A formal context}
  \label{fig:context}
\end{figure}

So let $\context:=(G,M,I)$ be finite, and let us call a sub-context 
$$\context_{-g-m}:=(G\setminus\{g\},M\setminus\{m\},I\cap((G\setminus\{g\})
\times(M\setminus\{m\}))$$
\textbf{rich}, if $$|\B(\context_{-g-m})|\ge\frac{1}{2}\cdot|\B(\context)|.$$

As a first contribution, we ask

\begin{question}\label{question_rich}
  Does there always exist a choice of a non-incident object/attribute
  pair $(g,m)$   in a non-trivial finite context $\GMI$, such that
  $\context_{-g-m}$ is rich? 
\end{question}




The aforementioned question is closely related to another problem which was studied by the author during his investigations of contranominal scale free contexts with as many concepts as possible:
\begin{question}\label{question_op}
  Does there always exist a choice of an object/attribute pair $(g,m)$
  in a context $\GMI$, such that $(G,M,J)$, where 
  $$J = I \cup \{(g,n) \mid n \in M \setminus \{m\}\} \cup \{(h,m)
  \mid h \in G \setminus \{g\}\},$$ 
  has at least as many concepts as the original?
\end{question}
It is not surprising that both questions are actually equivalent:
\begin{proposition}
  A subcontext $\context_{-g-m}$ is rich if and only if $(g,m)$ is a suitable choice to answer Question~\ref{question_op} affirmatively.
\end{proposition}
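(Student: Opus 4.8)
The plan is to establish the exact identity
$$|\B(G,M,J)| = 2\cdot|\B(\context_{-g-m})|$$
and then simply read off the proposition: $\context_{-g-m}$ is rich, i.e. $|\B(\context_{-g-m})|\ge\tfrac12|\B(\context)|$, precisely when $|\B(G,M,J)| = 2\cdot|\B(\context_{-g-m})| \ge |\B(\context)|$, which is exactly the requirement that $(g,m)$ answer Question~\ref{question_op} affirmatively. Throughout I would assume $(g,m)\notin I$, in keeping with Question~\ref{question_rich}; note that then $(g,m)\notin J$ as well.

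First I would record the derivations in $(G,M,J)$. Writing $X^J$ for the derivation there and $\cdot'$ for the derivation in $\context_{-g-m}$, the construction of $J$ yields $g^J = M\setminus\{m\}$ and $m^J = G\setminus\{g\}$, while every inner object $h\in G\setminus\{g\}$ satisfies $h^J = h'\cup\{m\}$ and every inner attribute $n\in M\setminus\{m\}$ satisfies $n^J = n'\cup\{g\}$. The structural key is a dichotomy: for every concept $(A,B)$ of $(G,M,J)$, exactly one of $g\in A$ and $m\in B$ holds. Indeed, if $g\in A$ then $B=A^J\subseteq g^J=M\setminus\{m\}$, so $m\notin B$; conversely if $m\notin B$ then every $n\in B$ differs from $m$, whence $(g,n)\in J$ and therefore $g\in B^J=A$. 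This gives $g\in A\iff m\notin B$, and the dichotomy follows.

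Next I would exhibit a bijection between each of the two classes and $\B(\context_{-g-m})$. For a concept $(A,B)$ with $g\in A$ (equivalently $m\notin B$), the derivations above give $A\setminus\{g\}=B'$ and $B=(A\setminus\{g\})'$, so $(A\setminus\{g\},B)$ is a concept of $\context_{-g-m}$; conversely adjoining $g$ to the extent of any concept of $\context_{-g-m}$ produces exactly such a concept, and since the map preserves $B$ it is a genuine bijection. Dually, concepts with $m\in B$ correspond bijectively to $\B(\context_{-g-m})$ via $(A,B)\mapsto(A,B\setminus\{m\})$. As the two classes are disjoint and, by the dichotomy, exhaust all concepts, adding the two cardinalities yields the claimed identity. (One even sees that $\B(G,M,J)$ consists of two copies of $\B(\context_{-g-m})$, the ideal $\downarrow\mu m$ sitting below the filter $\uparrow\gamma g$.)

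Each step is short once the derivations are written out; the conceptual heart — and the one point genuinely needing care — is the dichotomy $g\in A\iff m\notin B$, together with verifying that the two maps are inverse bijections rather than mere injections, which is where the precise formulas $g^J=M\setminus\{m\}$ and $m^J=G\setminus\{g\}$ do the real work. A check against Figure~\ref{fig:context} is reassuring: the full lattice has $15$ concepts, deleting $g$ and $m$ leaves $7$ (and $2\cdot 7=14<15$, so the pair is neither rich nor suitable), whereas deleting $h$ and $m$ leaves $9$ (and $2\cdot 9=18\ge 15$, so that pair is both rich and suitable), exactly as the identity predicts.
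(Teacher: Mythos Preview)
Your proof is correct and arrives at the same identity $|\B(G,M,J)| = 2\,|\B(\context_{-g-m})|$ that drives the paper's argument. The paper's version is more compressed: it simply observes that $(G,M,J)$ is the direct sum $(\{g\},\{m\},\emptyset) + \context_{-g-m}$ and invokes the standard FCA fact that the concept lattice of a direct sum is the direct product of the factor lattices, yielding $\BO(2)\times\B(\context_{-g-m})$. Your dichotomy $g\in A\iff m\notin B$ together with the two explicit bijections is precisely the content of that direct-product decomposition, so you are proving the same structural fact from first principles rather than quoting it. The trade-off is the usual one: the paper's route is shorter for readers who already know the direct-sum theorem, while yours is self-contained and makes the mechanism (including the order structure $\downarrow\!\mu m$ below $\uparrow\!\gamma g$) visible.
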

\begin{proof} Let $(g,m)$ be a non-incident pair. The concept lattice of the context $(G,M,J)$ described in Question~\ref{question_op} is clearly $(\{g\},\{m\},\emptyset) + \context_{-g-m}$, where $+$ denotes direct sum of contexts. The associated concept lattice is therefore the direct product of $\BO(2)$ and $\B(\context_{-g-m})$, which has at least as many elements as $\B(\context)$ whenever $\context_{-g-m}$ is rich. Conversely, if $(G,M,J)$ (as defined in the description of Question~\ref{question_op}) has at least as many concepts as $\context$, then clearly the removal of $g$ from $(G,M,J)$ yields a context with at least half of the concepts of $\context$ and in which $m$ is a full column. The removal of a full column does not change the number of concepts (and neither changes the structure of the lattice).
\end{proof}

We now illustrate in which mathematical context Question~\ref{question_op} came to our attention.

\subsection{Extremal results relating number of concepts and contranominal scales}

A context of the form $(\{1,\ldots,j\},\{1,\ldots,j\},\neq)$ will be called a \emph{contranominal scale of size $j$}. Albano and Chornomaz showed the following result relating number of concepts and contranominal scales, where an operation which they call ``doubling'' was employed:

\begin{theorem}\label{teo_nk_extremal}
  An arbitrary formal context with exactly $n$ objects and no contranominal scale of size $c$ as a subcontext may have up to $\sum_{i=0}^{c-1} {n \choose i}$ concepts (but no more). The associated lattices achieving this bound are precisely consecutive doublings of chains inside boolean lattices.
\end{theorem}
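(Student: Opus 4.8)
The plan is to reduce the counting bound to the Sauer--Shelah--Perles lemma from extremal set theory, via a dictionary between contranominal subscales and shattered sets. First I would fix a finite context $(G,M,I)$ with $|G|=n$ and regard the family of extents $\mathcal{E}\subseteq 2^{G}$ as a set system on the $n$-element ground set $G$; since the number of concepts equals $|\mathcal{E}|$, it suffices to bound $|\mathcal{E}|$. The bridge I would prove is the equivalence: the context contains a contranominal scale of size $c$ as a subcontext if and only if $\mathcal{E}$ shatters some $c$-element subset of $G$. For the forward direction, given a contranominal scale on objects $g_1,\dots,g_c$ and attributes $m_1,\dots,m_c$ with $g_i\,I\,m_j\iff i\neq j$, the attribute extents $m_j'$ trace on $\{g_1,\dots,g_c\}$ as the complements of singletons, and since extents are intersection-closed, the intersections $\bigcap_{j\in S}m_j'$ realize every subset of $\{g_1,\dots,g_c\}$ as $S$ ranges over $2^{[c]}$; hence this set is shattered. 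Conversely, if $\mathcal{E}$ shatters $\{g_1,\dots,g_c\}$, then for each $i$ some extent misses exactly $g_i$ among these objects, and choosing an attribute witnessing that exclusion recovers a contranominal scale.

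With this dictionary in place, a context has no contranominal scale of size $c$ exactly when $\mathcal{E}$ has Vapnik--Chervonenkis dimension at most $c-1$. The Sauer--Shelah--Perles lemma then yields $|\mathcal{E}|\le\sum_{i=0}^{c-1}\binom{n}{i}$, which is the asserted upper bound; attainability for $c>n$ is the full Boolean case $|\mathcal{E}|=2^{n}$. This settles the quantitative half of the statement, and I expect it to be the routine part.

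For the characterization of the extremal lattices I would run an induction driven by the Pascal recursion $\sum_{i=0}^{c-1}\binom{n}{i}=\sum_{i=0}^{c-1}\binom{n-1}{i}+\sum_{i=0}^{c-2}\binom{n-1}{i}$, which mirrors the inductive proof of Sauer--Shelah by deletion of a single object $g$. Removing $g$ splits $\mathcal{E}$ into its \emph{trace} on $G\setminus\{g\}$ (a closure system on $n-1$ objects still having no contranominal scale of size $c$) and the \emph{doubled} subfamily $\mathcal{D}=\{A : A,\,A\cup\{g\}\in\mathcal{E}\}$, which has no contranominal scale of size $c-1$, since a shattered $(c-1)$-set in $\mathcal{D}$ would become a shattered $c$-set once $g$ is reattached. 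Equality in the bound forces equality in both summands simultaneously, so by the induction hypothesis the trace lattice and the lattice of $\mathcal{D}$ are each consecutive doublings of chains inside Boolean lattices, and the whole lattice $\B(G,M,I)$ is recovered from the trace by reinserting $g$, i.e.\ by the doubling operation of Albano and Chornomaz applied along the convex image of $\mathcal{D}$. The remaining work is to verify that extremality compels this reinserted part to be a chain and the base of the recursion to be a Boolean lattice, matching the claimed normal form.

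The main obstacle is precisely this equality analysis. The Sauer--Shelah bound is attained by many set systems, the so-called maximum classes, and these are in general not chain-like; what rescues the clean ``doublings of chains'' description is that an extent family is intersection-closed, and I expect the decisive lemma to be that an intersection-closed maximum class forces the doubled subfamily $\mathcal{D}$ to embed as a chain in the trace lattice at each step. Pinning down this rigidity---proving both that every consecutive doubling of chains inside a Boolean lattice attains the bound and that no other lattice does---while keeping the bookkeeping of the doubling operation consistent across the induction, is where the real effort lies.
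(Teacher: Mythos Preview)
The paper does not actually prove this theorem: it is quoted as a prior result of Albano and Chornomaz, introduced only to motivate the later questions, and no argument for it appears anywhere in the text. So there is no ``paper's own proof'' to compare against; I can only assess your proposal on its own merits.

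Your dictionary between contranominal subscales and shattered subsets of $G$ is correct in both directions. In the converse you should note explicitly that the witnessing attributes $m_1,\dots,m_c$ are pairwise distinct (if $m_i=m_j$ with $i\neq j$ then $g_j\in m_i'$ and $g_j\notin m_j'$ collide), but this is immediate. With that in hand, the Sauer--Shelah--Perles lemma gives the bound $|\mathcal{E}|\le\sum_{i=0}^{c-1}\binom{n}{i}$ directly, and attainability is easy. So the quantitative half is complete.

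For the characterization you have only a plan, and you say so yourself. The Pascal-recursion induction on $n$ is the right skeleton, and your observation that the doubled family $\mathcal{D}$ has VC dimension at most $c-2$ is correct. The genuine gap is the step you flag: showing that intersection-closure forces $\mathcal{D}$ to sit as a \emph{chain} in the trace lattice. This is not automatic from Sauer--Shelah equality alone (maximum classes are plentiful), and the argument really does use that $\mathcal{E}$ is a closure system. The original Albano--Chornomaz proof handles this by working directly with the doubling construction and an explicit induction on $c$; your route via the Sauer--Shelah equality case is parallel in spirit but you would still need to supply that rigidity lemma before the characterization is established.
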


It is natural that further results in this direction try to include the number of attributes as a piece of additional information: How many concepts may a formal context with $n$ objects, $m$ attributes and no contranominal scale of size $c$ have? Is that bound achievable? If so, what can be said about contexts (or associated lattices) which achieve this bound? A first step towards that is the following:

\begin{conjecture} Amongst the contexts with $n$ objects, $m$ attributes and no contranominal scale of size $c \leq \min\{n,m\}+1$, every context with maximum number of concepts has a contranominal scale of size $c-1$ as a subcontext.
\end{conjecture}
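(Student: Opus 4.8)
The plan is to reduce the conjecture, via Theorem~\ref{teo_nk_extremal}, to a single lower bound on the two-parameter extremal function, and then to attack that bound by an explicit construction. Write $f(n,m,c)$ for the maximum number of concepts among contexts with $n$ objects, $m$ attributes and no contranominal scale of size $c$. I would argue by contradiction. Let $\context$ be such a context with $|\B(\context)| = f(n,m,c)$, and suppose it has no contranominal scale of size $c-1$; then its largest contranominal scale has size at most $c-2$. Contranominal scales are invariant under transposition, and transposition preserves the number of concepts, so Theorem~\ref{teo_nk_extremal} applies both to $\context$ and to its transpose with forbidden size $c-1$, giving
\[ |\B(\context)| \le \min\Bigl\{ \sum_{i=0}^{c-2}\binom{n}{i},\ \sum_{i=0}^{c-2}\binom{m}{i} \Bigr\} =: U(n,m,c). \]

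The conjecture therefore follows once one proves the strict inequality $f(n,m,c) > U(n,m,c)$ for every $2 \le c \le \min\{n,m\}+1$, call it $(\star)$: indeed $(\star)$ would give $|\B(\context)| \le U(n,m,c) < f(n,m,c) = |\B(\context)|$, a contradiction, so \emph{every} maximiser must contain a contranominal scale of size $c-1$ (the argument rules out any maximiser lacking it, not merely some). I regard $(\star)$ as the crux; it is precisely a lower bound on the function $f$ whose exact determination is the open problem raised at the start of this subsection. The boundary case $c = \min\{n,m\}+1$ is easy: there a size-$c$ scale cannot occur at all, so $f = 2^{\min\{n,m\}}$ while $U = 2^{\min\{n,m\}}-1$, and any context attaining $2^{\min\{n,m\}}$ concepts has independent attributes, hence contains the required scale directly.

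For $c \le \min\{n,m\}$ (say $m \le n$, so the target is to beat $\sum_{i=0}^{c-2}\binom{m}{i}$) the plan is to exhibit a witness. By Theorem~\ref{teo_nk_extremal} there is a context with $m$ objects, no size-$c$ scale, and $\sum_{i=0}^{c-1}\binom{m}{i}$ concepts; its transpose $\mathbb{E}$ has $m$ attributes, no size-$c$ scale, and $\sum_{i=0}^{c-1}\binom{m}{i} = U(n,m,c) + \binom{m}{c-1} > U(n,m,c)$ concepts. If $\mathbb{E}$ has at most $n$ objects, padding with full objects (which alter neither the count nor the contranominal scales) produces the desired $(n,m,c)$-competitor and settles $(\star)$.

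The main obstacle is exactly the object budget. The number of objects of $\mathbb{E}$ equals the number of attributes (meet-irreducibles) of that $m$-object extremal context, and this may exceed $n$ when $n$ is close to $m$: already for $n=m=c=3$ the value $\sum_{i<c}\binom{m}{i}=7$ is unattainable and $f=6$, so the witness beating $U=4$ cannot be the full extremal object. In that regime one must replace $\mathbb{E}$ by a \emph{partial} doubling that still carries more than $U(n,m,c)$ concepts while fitting within $n$ objects and $m$ attributes; analysing how the doubling operation underlying Theorem~\ref{teo_nk_extremal} acts on join-irreducibles, so as to certify this strict surplus simultaneously in both parameters, is where I expect the real difficulty to lie. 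The rich-subcontext theorem of the present paper supplies the complementary halving estimate $|\B(\context)| \le 2\,|\B(\context_{-g-m})|$ for a suitable non-incident pair $(g,m)$, which, paired with such a construction, should moreover characterise the extremal contexts.
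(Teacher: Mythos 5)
You should first be aware that the paper does not prove this statement at all: it is posed as an open conjecture, and Section~\ref{section:app} proves only the weaker Claim~\ref{claim_weak_conj} --- that \emph{some} extremal context contains $\CN(c-1)$ --- by iterating the operation $nop$ built on Corollary~\ref{coro-answer}. So there is no paper proof to match, and your proposal, as you yourself concede, is not a proof either. The sound part: the reduction to $(\star)$, i.e.\ to $f(n,m,c) > U(n,m,c)$, is valid (a maximiser lacking $\CN(c-1)$ is bounded by Theorem~\ref{teo_nk_extremal} applied to the context and its transpose, since $\CN(j)$ is isomorphic to its transpose and transposition preserves the number of concepts), the padding with full objects is harmless as you say, and the boundary case $c=\min\{n,m\}+1$ goes through. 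The genuine gap is that $(\star)$ is established only when the transposed extremal context $\mathbb{E}$ fits inside the object budget $n$; in the complementary regime --- your own instance $n=m=c=3$, where $U=4$ but $\sum_{i=0}^{c-1}\binom{3}{i}=7$ is unattainable --- you offer only the hope of a ``partial doubling'' with a certified strict surplus in both parameters simultaneously. That is precisely the open two-parameter extremal problem the paper formulates immediately before stating the conjecture, so the conjecture has been reduced to an open problem, not proved. (Your numerical claims there, e.g.\ $f(3,3,3)=6$, are also asserted rather than argued.)

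Two further observations. First, your reduction overshoots: any $\CN(c-1)$-free context with $n$ objects and $m$ attributes has at most $f(n,m,c-1)$ concepts, and $f(n,m,c-1) \le U(n,m,c)$ by the same transposition argument, so the conjecture is in fact \emph{equivalent} to the strict monotonicity $f(n,m,c) > f(n,m,c-1)$; insisting on beating $U(n,m,c)$, which can exceed $f(n,m,c-1)$, may make the task strictly harder than necessary. Second, the paper itself records evidence that the strict surplus you need is delicate: for the context of Figure~\ref{fig_resistente} one has $|\B(\opgmk)| = |\B(\context)| = 22$ for \emph{every} choice of $g$ and $m$, so no sharpening of the rich-subcontext machinery (your ``complementary halving estimate'') to a strict inequality via a cleverer choice of the pair $(g,m)$ can work in general; whatever closes the gap in $(\star)$, or in the equivalent monotonicity statement, must come from a genuinely new construction or counting argument.
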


We show in Section~\ref{section:app} that a positive answer to Question~\ref{question_op} is able to establish, without substantial additional effort, the following weaker form of the conjecture:

\begin{claim}\label{claim_weak_conj} Amongst the contexts with $n$ objects, $m$ attributes and no contranominal scale of size $c \leq \min\{n,m\} + 1$, there exists one context with maximum number of concepts which has a contranominal scale of size $c-1$ as a subcontext.
\end{claim}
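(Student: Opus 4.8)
The plan is to argue by induction on $c$, invoking the positive answer to Question~\ref{question_op} exactly once, in the guise of the operation $\opgm$ together with the direct-sum decomposition from the Proposition. For parameters with $c\le\min\{n,m\}+1$, write $f(n,m,c)$ for the maximum number of concepts among contexts with $n$ objects, $m$ attributes and no contranominal scale of size $c$. The base case $c=1$ is immediate: having no contranominal scale of size $1$ means the context is full, so $f(n,m,1)=1$ is attained only by the full context, which trivially contains the empty ($0$-sized) contranominal scale.

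The one structural fact I would isolate first is that the size of a largest contranominal scale is additive under direct sums: if $\context=\context_1+\context_2$, then a largest contranominal scale of $\context$ has size equal to the sum of the largest sizes in $\context_1$ and in $\context_2$. The reason is that in a direct sum every cross pair (an object of one summand with an attribute of the other) is incident, so the empty diagonal of any contranominal scale of $\context$ must match objects with attributes inside a common summand; conversely, two scales, one in each summand, combine into a scale of the summed size. Applied to the one-point context $(\{*\},\{*\},\emptyset)$, whose largest scale has size $1$, this yields that $(\{*\},\{*\},\emptyset)+\context_2$ contains a contranominal scale of size $d+1$ if and only if $\context_2$ contains one of size $d$.

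For the inductive step, assume the statement for $c-1$, fix $n,m$ with $c\le\min\{n,m\}+1$, and let $\context$ attain $f(n,m,c)$. If $\context$ already contains a contranominal scale of size $c-1$ we are done. Otherwise $\context$ has no such scale, and I apply the positive answer to Question~\ref{question_op}: there is a non-incident pair $(g,m_0)$ with $\context_{-g-m_0}$ rich, so that $|\B(\context_{-g-m_0})|\ge\frac{1}{2}|\B(\context)|=\frac{1}{2}f(n,m,c)$. Since any contranominal scale of $\context_{-g-m_0}$ is also one of $\context$, the subcontext $\context_{-g-m_0}$ has no scale of size $c-1$; having $n-1$ objects and $m-1$ attributes, it lies in the class for $(n-1,m-1,c-1)$, whence $|\B(\context_{-g-m_0})|\le f(n-1,m-1,c-1)$ and therefore $f(n,m,c)\le 2f(n-1,m-1,c-1)$. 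Now, using the induction hypothesis (valid since $c-1\le\min\{n-1,m-1\}+1$), choose a context $\cald$ attaining $f(n-1,m-1,c-1)$ and containing a contranominal scale of size $c-2$, and set $\cald':=(\{*\},\{*\},\emptyset)+\cald$. Then $\cald'$ has $n$ objects and $m$ attributes; by additivity its largest scale has size at most $1+(c-2)=c-1<c$, so $\cald'$ belongs to the class for $(n,m,c)$, while $|\B(\cald')|=2|\B(\cald)|=2f(n-1,m-1,c-1)\ge f(n,m,c)$. Hence $\cald'$ is extremal, and by additivity it contains a contranominal scale of size $(c-2)+1=c-1$, completing the induction.

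I expect the main obstacle to lie not in the induction or the additivity observation, but in pinning down the case analysis: Question~\ref{question_op} supplies only \emph{some} pair $(g,m_0)$, with no control over whether its deletion destroys a largest scale, so one must first dispose of the case in which the extremal context already contains a scale of size $c-1$ in order to force $\context_{-g-m_0}$ into the class for the smaller parameter $c-1$. A secondary point requiring care is the propagation of the boundary hypothesis $c\le\min\{n,m\}+1$ through the recursion and the verification of the degenerate small cases.
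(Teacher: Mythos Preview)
Your proof is correct and takes a different route from the paper's. The paper argues iteratively within the fixed dimensions $(n,m)$: it defines a nondeterministic operation $nop$ that applies $op^{g,m}$ (with $g$ supplied by Corollary~\ref{coro-answer}) to the \emph{noncontranominal kernel} of the current context, i.e., to what remains after peeling off the maximal contranominal direct summand; each step does not decrease $|\B|$, raises the contranominal-summand size $\cont^*$ by exactly one, and raises the contrast $\cont$ by at most one, so the last iterate still in $\calk_c^{n,m}$ is extremal with $\cont=c-1$. You instead induct on $c$, descending to $(n-1,m-1,c-1)$ via one rich deletion and returning by direct-summing with the one-point contranominal. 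Since Proposition~1 identifies $\opgmk$ with $(\{g\},\{m\},\emptyset)+\context_{-g-m}$, both arguments hinge on the same atomic move; you organise it as a recursion, the paper as an iteration. Your version dispenses with the auxiliary notions $\cont^*$ and noncontranominal kernel, replacing them by the single observation that $\cont$ is additive under direct sums, and it yields as a byproduct the identity $f(n,m,c)=2f(n-1,m-1,c-1)$ whenever some extremal context for $(n,m,c)$ lacks a $\CN(c-1)$. The paper's version, conversely, transforms a given extremal context into the desired one by an explicit chain of modifications, without invoking an inductive hypothesis on smaller parameters. One small point you leave implicit in the inductive step: a non-incident pair exists in $\context$, because for $c\ge 2$ and $\min\{n,m\}\ge 1$ the full context (having a single concept) is never extremal.
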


The aforementioned result and questions belong to the framework of \emph{extremal combinatorics} and may be rewritten in graph-theoretic language as well. Citing Béla Bollobás: ``Extremal graph theory, in its strictest sense, is a branch of graph theory developed and loved by Hungarians.''~\cite{Bollobas:1978}. It is therefore no surprise that a very important milestone of that area was established by another Hungarian:

\begin{theorem}[Turán, 1941]\label{teo_turan} An arbitrary simple graph with exactly $n$ vertices and no clique of size $r$ may have up to $\lfloor \left(1 - \frac{1}{r-1}\right) \frac{n^2}{2} \rfloor$ edges (but no more). The graphs achieving this bound are precisely the complete, balanced, $(r-1)$-partite graphs. 
\end{theorem}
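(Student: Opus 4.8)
The plan is to establish the extremal bound and the characterisation of the extremal graphs simultaneously, by analysing the structure of an edge-maximal $K_r$-free graph through a symmetrisation (vertex-cloning) argument. So fix $n$ and $r$, and let $G$ be a graph on $n$ vertices that contains no clique of size $r$ and has the maximum possible number of edges subject to this constraint; such a $G$ exists by finiteness. The goal is to show that $G$ must be a complete multipartite graph whose parts are as balanced as possible, after which the edge count is a routine computation.

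First I would prove that in $G$ the relation of non-adjacency is transitive, i.e.\ that whenever $u,v,w$ are vertices with $u$ not adjacent to $v$ and $v$ not adjacent to $w$, then $u$ is not adjacent to $w$. The tool is the cloning operation: deleting all edges at a vertex $x$ and then joining $x$ to exactly the neighbourhood $N(y)$ of another vertex $y$, turning $x$ into a non-adjacent twin of $y$. Such an operation never creates a $K_r$, because two non-adjacent twins cannot lie in a common clique, so any clique meeting $\{x,y\}$ uses at most one of them and can be mirrored inside the old graph. Supposing non-adjacency failed to be transitive, we would have $u\not\sim v$, $v\not\sim w$, $u\sim w$, and I would split into two cases according to the degrees. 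If $v$ has strictly smaller degree than $u$ (or than $w$), then cloning $v$ onto the larger-degree vertex strictly increases the edge count, contradicting maximality. Otherwise $\deg(v)\ge\deg(u)$ and $\deg(v)\ge\deg(w)$, and cloning both $u$ and $w$ onto $v$ changes the edge count by $2\deg(v)-\deg(u)-\deg(w)+1\ge 1$ (the $+1$ arising because the edge $uw$ is destroyed), again contradicting maximality. Hence non-adjacency is transitive.

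Transitivity together with symmetry makes non-adjacency an equivalence relation, so $G$ is the complete multipartite graph whose parts are its equivalence classes; and since a complete $k$-partite graph contains $K_k$, the $K_r$-freeness forces at most $r-1$ parts. It then remains to optimise over part sizes. Writing the number of edges of a complete multipartite graph with parts of sizes $n_1,\dots,n_k$ as $\binom{n}{2}-\sum_i\binom{n_i}{2}$, maximising edges is the same as minimising $\sum_i\binom{n_i}{2}$. Splitting any part strictly lowers this sum (since $\binom{a+b}{2}=\binom{a}{2}+\binom{b}{2}+ab$), so one uses exactly $r-1$ parts; and by convexity of $x\mapsto\binom{x}{2}$ the sum is minimised precisely when the sizes differ pairwise by at most one, that is, for the balanced complete $(r-1)$-partite graph. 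This gives both the uniqueness of the extremal graph and, upon evaluating $\binom{n}{2}-\sum_i\binom{n_i}{2}$ with the sizes $\lceil n/(r-1)\rceil$ and $\lfloor n/(r-1)\rfloor$, the claimed value $\lfloor(1-\tfrac{1}{r-1})\tfrac{n^2}{2}\rfloor$.

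The main obstacle I anticipate is the bookkeeping in the symmetrisation step: one must verify carefully that each cloning operation preserves $K_r$-freeness and genuinely increases the number of edges, with the destroyed edge $uw$ accounted for correctly in the boundary subcase where the degree inequalities are tight. The remaining work — reading off the complete-multipartite structure, the convexity argument for balancing, and matching the edge count to the floor expression — is routine.
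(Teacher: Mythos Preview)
The paper does not supply a proof of Tur\'an's theorem: it is quoted as a classical result, purely to situate the paper's own extremal question in a familiar framework. So there is no ``paper's proof'' to compare against.

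That said, your proposal is a correct and complete proof. It is the standard Zykov symmetrisation argument: cloning preserves $K_r$-freeness, the two-case degree analysis forces non-adjacency to be transitive in an edge-maximal graph, hence the graph is complete multipartite on at most $r-1$ parts, and convexity of $x\mapsto\binom{x}{2}$ pins down the balanced partition as the unique optimum. Your accounting in the double-clone case (the $+1$ from the destroyed edge $uw$) is right, and the final edge count does match the floor expression stated in the paper. There is nothing to correct.
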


The nature of the questions answered by results like Theorems~\ref{teo_nk_extremal} and~\ref{teo_turan} are analogous. More abstractly, Bollobás describes in~\cite{Bollobas:1978} the following typical setting in an extremal graph theory problem: 

\paragraph{Setting:} Given a property $\mathcal{P}$ and an invariant $\mu$ for a class $\mathcal{H}$ of graphs, we wish to determine the least value $k$ for which every graph $G$ in $\mathcal{H}$ with $\mu(G) > k$ has property $\mathcal{P}$. The graphs $G$ in $\mathcal{H}$ without property $\mathcal{P}$ and $\mu(G) = k$ are called the \emph{extremal graphs} for the problem. 

Theorem~\ref{teo_nk_extremal} and the mentioned conjecture are instances of the described setting above. In both, $\mathcal{P}$ denotes the property of having a contranominal scale of some given size, while $\mu$ corresponds to the number of concepts of a context.  They differ, however, in what $\mathcal{H}$ gets defined to be: the conjecture asks about the interaction between $\mathcal{P}$ and $\mu$ in a subclass of the class $\mathcal{H}$ treated in Theorem~\ref{teo_nk_extremal}, since that theorem is indifferent about the number of attributes that a context has. At this point we start to digress and shall now turn back to rich subcontexts.  

\section{Existence of rich subcontexts}



We set some notation. For a formal context $\context = \GMI$, $g \in G$ and $m \in M$ such that $g\! \notI\! m$, we define ${op}^{g,m}(\context) = (G,M,J)$, where $(G,M,J)$ is defined as in Question~\ref{question_op}. Derivation will be denoted by writing the incidence relation in a superscript, and we only use $(\cdot)^J$ to denote derivation in a context obtained by the operation $op$. To attack Question~\ref{question_op}, we make use of the following:

\begin{definition}[mixed generator]
  Let $R \subseteq G$ be fixed. A set $S \subseteq G$ is called a $R$-mixed generator (of the extent $S^{II}$) if, for every $g \in G$, both implications below hold:
  \begin{flalign*}
    \quad i)\, &g \in (S \cap R) \Rightarrow (S \setminus \{g\})^{II} \neq S^{II} &\\
    \quad ii)\, &g \notin (S \cup R) \Rightarrow (S \cup \{g\})^{II} \neq S^{II}. &
  \end{flalign*}
\end{definition}

Note that $G$-mixed generators are minimal generators and that $\emptyset$-mixed generators are extents. We ocasionally refer to a $R$-mixed generator simply by \emph{mixed generator} or by \emph{mixgen} if there is no possibility for ambiguity. Proposition~\ref{prop-mixgen-extents} describes which mixed generators are extents as well with arbitrary $R$. 

\begin{proposition}\label{prop-mixgen-extents}
Let $\GMI$ be a context and $S$ be a $R$-mixed generator. Then, $S$ is an extent if and only if $(S^{II} \setminus S) \cap R = \emptyset$. 
\end{proposition}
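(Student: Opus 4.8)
The plan is to show that, under the mixed-generator hypothesis, the set $S^{II}\setminus S$ is already contained in $R$; once this is established, intersecting with $R$ changes nothing and the claimed equivalence collapses to the trivial statement that $S$ is an extent precisely when $S^{II}\setminus S=\emptyset$. The forward implication requires no hypothesis at all: if $S$ is an extent then $S=S^{II}$, whence $S^{II}\setminus S=\emptyset$ and in particular $(S^{II}\setminus S)\cap R=\emptyset$.

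For the reverse implication I would first record the elementary fact that, for any $g\in G$,
\[
  (S\cup\{g\})^{II}=S^{II}\quad\Longleftrightarrow\quad g\in S^{II}.
\]
The implication from right to left uses only that $(\cdot)^{II}$ is a closure operator: from $S\subseteq S\cup\{g\}\subseteq S^{II}$ one obtains $S^{II}\subseteq(S\cup\{g\})^{II}\subseteq S^{IIII}=S^{II}$, forcing equality; the converse is immediate since $g\in(S\cup\{g\})^{II}$. The contrapositive form, namely $(S\cup\{g\})^{II}\neq S^{II}$ holds iff $g\notin S^{II}$, is what I shall use.

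Now the second defining condition of an $R$-mixed generator states that every $g\notin S\cup R$ satisfies $(S\cup\{g\})^{II}\neq S^{II}$, hence by the above $g\notin S^{II}$. Reading this contrapositively, every element of $S^{II}$ lies in $S\cup R$, that is, $S^{II}\setminus S\subseteq R$. Consequently $(S^{II}\setminus S)\cap R=S^{II}\setminus S$, so the assumption $(S^{II}\setminus S)\cap R=\emptyset$ yields $S^{II}\setminus S=\emptyset$, i.e.\ $S=S^{II}$. Thus $S$ is an extent, completing the equivalence.

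I do not expect a genuine obstacle: the argument is purely formal. The only points deserving care are getting the direction of the closure fact right and noticing that the first condition (irredundancy on $S\cap R$) is never used — the whole proposition rests on the second condition alone.
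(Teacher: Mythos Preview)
Your proof is correct and follows essentially the same route as the paper: both arguments hinge on the observation that condition $ii)$ of the mixed-generator definition forces $S^{II}\setminus S\subseteq R$ (the paper establishes this via the contrapositive, picking $g\in S^{II}\setminus S$ and concluding $g\in R$, while you state the containment directly). Your explicit verification of the closure-operator fact $(S\cup\{g\})^{II}=S^{II}\iff g\in S^{II}$ is the only additional detail, and your remark that condition $i)$ is never used is accurate.
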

\begin{proof} The direct implication is clear since $S^{II} \setminus S = \emptyset$ whenever $S$ is an extent. For the converse, we prove the contraposition. Suppose that $S$ is not an extent and take $g \in S^{II} \setminus S$. Note that $g \in S^{II}$ implies $(S \cup \{g\})^{II} = S^{II}$. Condition $ii)$ of the definition of mixgens forces, therefore, that $g \in S \cup R$. Because of $g \notin S$, it holds that $g \in R$.
\end{proof}

We are particularly interested in the case when $R$ is the set of objects not having some fixed attribute, that is, when $R = G \setminus m^I$. For this reason, we set the notation $\mic = G \setminus m^I$. Note that, in this case, the set $R$ is precisely the set of objects whose derivation are changed by the operation $op$.

\begin{proposition} \label{equiv_mixgen_extensao_disjunto}
Let $\GMI$ be a context, $R = \mic$ for some attribute $m$ and $S \subseteq G$ be a set with $S \cap R = \emptyset$. Then, $S$ is a $R$-mixed generator if and only if $S$ is an extent.
\end{proposition}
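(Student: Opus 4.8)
The plan is to prove both directions by carefully unpacking the definition of a mixed generator under the special hypothesis that $S \cap R = \emptyset$ with $R = \mic = G \setminus m^I$. First I would observe that the hypothesis $S \cap R = \emptyset$ means $S \subseteq m^I$, i.e.\ every object in $S$ has the attribute $m$. The key structural fact I want to exploit is that condition $i)$ in the definition of a mixed generator is \emph{vacuous} here: since $S \cap R = \emptyset$, there is no $g \in S \cap R$, so the first implication holds trivially for every $g \in G$. Thus, under this hypothesis, being a mixed generator reduces entirely to condition $ii)$, which must hold for all $g \notin S \cup R$.

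For the backward direction, suppose $S$ is an extent. I would show condition $ii)$ directly. Take any $g \notin S \cup R$; I must show $(S \cup \{g\})^{II} \neq S^{II}$. Since $S$ is an extent, $S^{II} = S$, so it suffices to show $g \notin (S\cup\{g\})^{II}$ would fail, i.e.\ that adding $g$ strictly enlarges the extent. Because $g \notin S = S^{II}$, the object $g$ is not already in the closure, hence $(S \cup \{g\})^{II} \supseteq S \cup \{g\} \supsetneq S = S^{II}$, giving $(S \cup \{g\})^{II} \neq S^{II}$. Since condition $i)$ is vacuous, $S$ is a mixed generator.

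For the forward direction, suppose $S$ is an $R$-mixed generator with $S \cap R = \emptyset$; I want to conclude $S$ is an extent. The natural tool is Proposition~\ref{prop-mixgen-extents}, which tells me $S$ is an extent if and only if $(S^{II} \setminus S) \cap R = \emptyset$. So it suffices to show $(S^{II} \setminus S) \cap R = \emptyset$. Equivalently, I must show $S^{II} \setminus S$ contains no element of $R$, i.e.\ every object $g \in S^{II} \setminus S$ lies in $m^I$. Here I would use the specific structure $R = G \setminus m^I$ together with the fact that $S \subseteq m^I$: since each object of $S$ has attribute $m$, the derivation $S^I$ contains $m$, whence every object in $S^{II}$ also has attribute $m$, so $S^{II} \subseteq m^I$. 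Therefore $S^{II} \cap R = S^{II} \cap (G \setminus m^I) = \emptyset$, which immediately gives $(S^{II}\setminus S)\cap R = \emptyset$, and Proposition~\ref{prop-mixgen-extents} finishes the argument.

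The main thing to get right is the observation $S^{II} \subseteq m^I$, which is where the hypothesis $R = \mic$ (rather than an arbitrary $R$) does the real work; I expect this to be the crux, though it is a short monotonicity/closure computation rather than a genuine obstacle. Notably, the forward direction as I have sketched it does not even need the full mixed-generator hypothesis—it follows from $S \cap R = \emptyset$ alone via Proposition~\ref{prop-mixgen-extents}—so I would double-check whether the statement intends the mixed-generator assumption to be essential in that direction, or whether it is included only for symmetry with the backward direction and the broader narrative of the section.
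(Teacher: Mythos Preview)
Your proof is correct in both directions. The backward direction is essentially identical to the paper's. For the forward direction the paper argues directly: condition $ii)$ handles $g \notin S \cup R$, and for $g \in R$ one observes (as you do) that $m \in S^I$ forces $g \notin S^{II}$, so adding such $g$ changes the closure; combining the two cases gives that no $g \in G \setminus S$ lies in $S^{II}$, hence $S$ is an extent. You instead route through Proposition~\ref{prop-mixgen-extents}, reducing to $(S^{II}\setminus S)\cap R=\emptyset$ and establishing this via $S^{II}\subseteq m^I$. Both arguments rest on the same key observation $S\subseteq m^I \Rightarrow S^{II}\subseteq m^I$; your version has the virtue of reusing the earlier proposition, while the paper's is self-contained.

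One correction to your closing remark: the forward direction does \emph{not} follow from $S\cap R=\emptyset$ alone. Proposition~\ref{prop-mixgen-extents} has the mixed-generator hypothesis built in, and it is genuinely needed there (its proof uses condition $ii)$ to push any $g\in S^{II}\setminus S$ into $R$). Without the mixgen assumption the conclusion fails: take any non-closed $S\subseteq m^I$ (e.g.\ a single object among several sharing the same intent). So the hypothesis is essential, not merely present for symmetry.
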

\begin{proof}
  Suppose that $S$ is a mixed generator. Therefore, $(S \cup \{g\}) \neq S^{II}$ for every $g \in G \setminus (S \cup R)$. Moreover, $S \cap R = \emptyset$ and $R = \mic$ clearly imply $(S \cup \{g\})^{II} \neq S^{II}$ for every $g \in R$. Combining both yields $(S \cup \{g\})^{II} \neq S^{II}$ for every $g \in G \setminus S$, i.e., $S$ is an extent. For the converse, suppose that $S$ is an extent. Since $S \cap R = \emptyset$, the set $S$ fulfills trivially condition $i)$ of mixgens. Condition $ii)$ is likewise fulfilled by $S$ because of $(S \cup \{g\})^{II} \neq S^{II}$ for every $g \in G \setminus S \supseteq G \setminus (S \cup R)$.
\end{proof}

An easy but handy fact is the following:
\begin{proposition}\label{unique_mixgen}
A set which is a mixed generator and an extent is always the unique mixed generator of itself.
\end{proposition}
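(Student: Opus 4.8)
The plan is to fix a set $S$ that is simultaneously a mixed generator and an extent, to take an arbitrary mixed generator $T$ of the same extent (i.e.\ with $T^{II}=S^{II}$), and to force $T=S$. Because $S$ is an extent we have $S=S^{II}$, so throughout I may replace $S^{II}$ by $S$; in particular ``mixed generator of $S^{II}$'' simply means ``mixed generator whose closure equals $S$''. Extensivity and monotonicity of the derivation then give the easy inclusion at once: $T\subseteq T^{II}=S$.

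For the reverse inclusion I would argue by contradiction, supposing there is some $g\in S\setminus T$, and splitting into the two cases that the mixgen conditions single out. If $g\notin R$, then $g\notin T\cup R$, so condition $ii)$ for $T$ requires $(T\cup\{g\})^{II}\neq T^{II}$; but $g\in S=T^{II}$ means adjoining $g$ cannot change the closure, so $(T\cup\{g\})^{II}=T^{II}$, a contradiction. If instead $g\in R$, then $g\in S\cap R$, and now I invoke condition $i)$ for $S$: $(S\setminus\{g\})^{II}\neq S^{II}=S$. Since $S\setminus\{g\}\subseteq S$, this closure is a proper subset of $S$, from which I deduce $g\notin(S\setminus\{g\})^{II}$. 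On the other hand $T\subseteq S$ together with $g\notin T$ yields $T\subseteq S\setminus\{g\}$, hence $g\in T^{II}\subseteq(S\setminus\{g\})^{II}$ by monotonicity, contradicting $g\notin(S\setminus\{g\})^{II}$. As both cases are impossible, no such $g$ exists, so $S\subseteq T$ and therefore $T=S$.

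The only step that needs a little care -- and the one I would flag as the main obstacle -- is the deduction ``$(S\setminus\{g\})^{II}\subsetneq S$ implies $g\notin(S\setminus\{g\})^{II}$''. I would justify it by contradiction: were $g$ in that closure, then $(S\setminus\{g\})^{II}$ would contain both $S\setminus\{g\}$ and $g$, i.e.\ all of $S$, so it would contain $S^{II}=S$ and thus equal $S$, contradicting properness. Everything else reduces to bookkeeping with the two defining implications of a mixed generator and the standard order-theoretic properties of $(\cdot)^{II}$; note in particular that the argument never uses the special form $R=\mic$, so it holds for an arbitrary fixed $R$.
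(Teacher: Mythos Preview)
Your proof is correct and follows essentially the same approach as the paper: after noting $T\subseteq T^{II}=S$, both arguments rule out a hypothetical $g\in S\setminus T$ by splitting on whether $g\in R$ (using condition~$i)$ for $S$) or $g\notin R$ (using condition~$ii)$ for $T$). The paper phrases the split as $T\cap R\subsetneq S\cap R$ versus $T\setminus R\subsetneq S\setminus R$ and is terser, but the logical content is identical; your explicit justification of the ``main obstacle'' is a welcome unpacking of what the paper leaves implicit.
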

\begin{proof}
  Let $S, T \subseteq G$ be $R$-mixed generators with $T^{II} = S^{II} = S$. Since $S$ is an extent, it follows that $T \subseteq S$. A proper containment $T \cap R \subsetneq S \cap R$ would contradict the fact that $S$ is a mixed generator. Similarly, $T \setminus R \subsetneq S \setminus R$ would imply the existence of an object $g$ with $g \in S \setminus R$ and $g \notin T \setminus R$, yielding $(T \cup \{g\})^{II} = T^{II}$ and contradicting the fact that $T$ is a mixed generator.
\end{proof}

Consider the context present in Figure~\ref{fig:context} and set $R = \mic = \{g,h\}$. (The reader is invited to contemplate Figure~\ref{fig:hypergraph} for an alternative representation of that context). The set $S = \{h,i,j,k\}$ (we omit sometimes braces and commas) may be verified as being a mixed generator, since the removal of any element belonging to $S \cap R$ causes its closure (equivalently, its derivation) to change and there is no element in $G \setminus R$ which can be added to $S$ without changing its closure. Note that $S$ is not an extent. Similarly, one may observe that $ghijk$ is an extent but not a mixed generator. Lastly, $ij$ is both an extent and a mixed generator, whereas the set $hj$ is neither an extent nor a mixed generator.

\begin{figure}[htbp]
  \centering
  \begin{tikzpicture}[x=1mm,y=1mm,scale=1.20]
    \node (m) at (10,20) {};
    \node (n) at (10,0) {};
    \node (o) at (20,15) {};
    \node (p) at (0,15) {};
    \node (q) at (40,16) {};
 
    \node (g) at ($(m.center) + (-10,5)$)  {$g$};
    \node (h) at ($(m.center) + (10,5)$) {$h$};    
    \node (i) at ($(q) + (-10,6)$) {$i$};
    \node (j) at ($(q) + (8,-8)$) {$j$};
    \node (k) [below=7mm of n] {$k$};

    \begin{scope}[fill opacity=0.5]
      \filldraw[fill=white!100] ($(m)+(-4,0)$) 
      to[out=90,in=90] ($(o) + (5,0)$)
      to[out=270,in=270] ($(m) + (-4,0)$)
      ;
      
      \filldraw[fill=white!100] ($(m)+(4,0)$) 
      to[out=90,in=90] ($(p) + (-5,0)$)
      to[out=270,in=270] ($(m) + (4,0)$)
      ;
      
      \filldraw[rounded corners=8pt,fill=white!100] ($(o)+(-4,3)$) -- 
        ($(q) + (3,5)$) --
        ($(q) + (4.5,-2)$) --
        ($(o) + (-2.5,-4)$) -- cycle
        ;
      

      \filldraw[rounded corners=8pt,fill=white!100] ($(q.center)+(-5,7)$) -- 
        ($(q.center) + (5,7)$) --
        ($(q) + (5,-20)$) -- 
        ($(n) + (-5,-4)$) -- 
        ($(n) + (-5,4)$) -- 
        ($(n) + (25,4)$) -- cycle;


      \filldraw[rounded corners=8pt, fill=white!100] ($(n) + (-6,-6)$) --
        ($(p) + (-4,3)$) -- 
        ($(o) + (4,3)$) --
        ($(n) + (6,-6)$) -- cycle;      
    \end{scope}    
    \fill (m) node {$m$};
    \fill (n) node {$n$};
    \fill (o) node {$o$};
    \fill (p) node {$p$};
    \fill (q) node {$q$};
  \end{tikzpicture}
  \caption{Representation of the context in Figure~\ref{fig:context} through complements of object-intents.}
  \label{fig:hypergraph}
\end{figure}
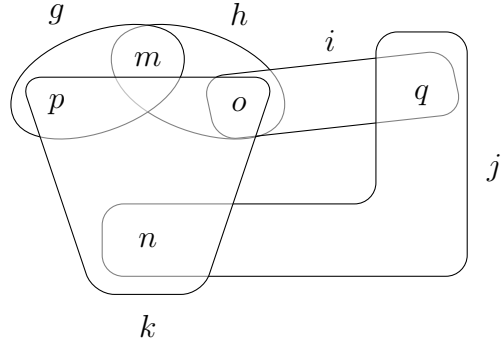
In Figure~\ref{fig:hypergraph}, objects in $R$ correspond to ellipses whereas objects not in $R$ correspond to closed polygonal curves with rounded corners. A mixed generator corresponds to an exact cover of vertices which is minimal with respect to ellipses and maximal with respect to polygonal curves.

The following proposition will be needed later, in the particular case when an object/attribute pair $(g,m)$ ``splits'' the concepts of a context: that is, $g$ is the only object without attribute $m$, while $m$ is the only attribute which $g$ does not have. Similarly with what was done with attributes, we define $\gic = M \setminus g^I$. For other incidence relations we write, for example, $\gjc$.

\begin{proposition} \label{mixgen_extremal} Let $S$ be a mixed generator. If $g \in G \setminus S$ is an object such that ${(S \cup \{g\})^{II} = S^{II} \cup \{g\}}$, $\gic \neq \emptyset$ and $\gic \cap \hic = \emptyset$ for every $h \in S$, then $S \cup \{g\}$ is a mixed generator.
\end{proposition}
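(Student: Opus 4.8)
The plan is to set $T := S \cup \{g\}$ and verify directly the two defining conditions of an $R$-mixed generator for $T$ (with $R$ the ambient set), leaning on the fact that $S$ already satisfies them. The first thing I would record is a clean reformulation of the hypothesis $\gic \cap \hic = \emptyset$ for all $h \in S$: it says exactly that every object of $S$ carries every attribute of $\gic$, i.e. $\gic \subseteq \bigcap_{h \in S} h^{I} = S^{I}$. Combined with $\gic \neq \emptyset$, choosing any $n \in \gic$ gives $n \in S^{I} \setminus g^{I}$, so $S^{I} \not\subseteq g^{I}$ and hence $g \notin S^{II}$. This is the basic non-degeneracy fact behind the statement; it also yields $T^{I} = S^{I} \cap g^{I} = S^{I} \setminus \gic$ and confirms that in $T^{II} = S^{II} \cup \{g\}$ the object $g$ is genuinely new.

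For condition $i)$ I would take $x \in T \cap R$ and split into $x = g$ and $x \in S \cap R$. If $x = g$, then $(T \setminus \{g\})^{II} = S^{II}$, which differs from $T^{II} = S^{II} \cup \{g\}$ precisely because $g \notin S^{II}$. If $x \in S \cap R$, the mixgen property of $S$ gives $(S \setminus \{x\})^{II} \subsetneq S^{II}$, equivalently $(S \setminus \{x\})^{I} \supsetneq S^{I}$; set $A := (S \setminus \{x\})^{I} \setminus S^{I} \neq \emptyset$. The key observation here is that $A \cap g^{I} \neq \emptyset$: otherwise $A \subseteq M \setminus g^{I} = \gic \subseteq S^{I}$, contradicting $A \cap S^{I} = \emptyset$. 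Any $n \in A \cap g^{I}$ then lies in $(T \setminus \{x\})^{I} = (S \setminus \{x\})^{I} \cap g^{I}$ but not in $T^{I} \subseteq S^{I}$, so $(T \setminus \{x\})^{I} \neq T^{I}$ and therefore $(T \setminus \{x\})^{II} \neq T^{II}$.

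For condition $ii)$ I would take $x \notin T \cup R$, so that $x \notin S \cup R$ and $x \neq g$; the mixgen property of $S$ gives $(S \cup \{x\})^{II} \supsetneq S^{II}$. The crux is that this closure must acquire some new object other than $g$. Indeed, if one had $(S \cup \{x\})^{II} = S^{II} \cup \{g\}$, then $x \in S^{II} \cup \{g\}$ together with $x \neq g$ would force $x \in S^{II}$, whence $(S \cup \{x\})^{II} = S^{II}$, contradicting $g \notin S^{II}$. So I may pick $z \in (S \cup \{x\})^{II} \setminus S^{II}$ with $z \neq g$. By monotonicity $z \in (S \cup \{x\})^{II} \subseteq (T \cup \{x\})^{II}$, while $z \notin S^{II} \cup \{g\} = T^{II}$, so $(T \cup \{x\})^{II} \neq T^{II}$, as required.

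I expect the only genuine subtlety to be condition $ii)$: one must exclude the scenario in which adjoining $x$ to $S$ drags in exactly $g$ and nothing else, and this is precisely where $g \notin S^{II}$ and $x \neq g$ combine to force $x \in S^{II}$ and a contradiction. Everything else — condition $i)$ and the intent bookkeeping — is routine once the reformulation $\gic \subseteq S^{I}$ and the fact $g \notin S^{II}$ are in hand. Notably, the argument uses neither $R = \mic$ nor $g \in R$, so it holds for an arbitrary ambient $R$.
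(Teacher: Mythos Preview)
Your proof is correct and follows essentially the same route as the paper's: verify the two mixgen conditions for $T=S\cup\{g\}$ by reducing to those of $S$, using the reformulation $\gic\subseteq S^{I}$ (hence $g\notin S^{II}$) as the basic lever. For condition~$ii)$ the paper is slightly more direct: from $x\notin S\cup R$ it gets $x\notin S^{II}$, and since $x\neq g$ this gives $x\notin S^{II}\cup\{g\}=T^{II}$ immediately, without the detour through an auxiliary witness $z$; your argument via $z$ works but is not needed.
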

\begin{proof}
  Let $h \in G$ and suppose that $h \notin S \cup \{g\} \cup R$. Because $S$ is a mixed generator and $h \notin (S \cup R)$, it follows that $h \notin S^{II}$ and, since $h \neq g$, we have as well that $h \notin S^{II} \cup \{g\} = (S \cup \{g\})^{II}$, which is equivalent to $(S \cup \{g\} \cup \{h\})^{II} \neq (S \cup \{g\})^{II}$. Now, suppose that $h \in (S \cup \{g\}) \cap R$. If $h = g$, then it is clear that $((S \cup \{g\}) \setminus \{h\})^I = S^I \neq (S \cup \{g\})^I$. Otherwise, we have certainly that $h \in S \cap R$ and, since $S$ is a mixed generator, one necessarily has that there exists an attribute $n \in (S \setminus \{h\})^I$ such that $n \in \hic$ and, as a consequence, it holds that $n \in g^I$. Moreover, $[(S \cup \{g\}) \setminus \{h\}]^I = (S \setminus \{h\})^{I} \cap g^I,$ which implies $n \in [(S \cup \{g\}) \setminus \{h\}]^I$. Thus, $h \notin [(S \cup \{g\}) \setminus \{h\}]^{II}$, which is equivalent to $[(S \cup \{g\}) \setminus \{h\}]^{II} \neq (S \cup \{g\})^{II}$ and establishes condition $ii)$ of the definition.
\end{proof}

Let $\context = \GMI$ be a formal context. A \emph{representative system of mixed generators} is a family of subsets $\cals \subseteq \mathcal{P}(G)$ such that each $S \in \cals$ is a mixed generator and $S \mapsto S^{II}$ is an injection from $\cals$ into $\extents \context$. If the closure mapping is surjective as well, we call $\cals$ a \emph{complete representative system of mixed generators}. For brevity we shall write only \emph{complete system of mixed generators}.

\paragraph{Example:} Setting $R = \mic$ in the context of the Figures~\ref{fig:context} and~\ref{fig:hypergraph} one has that the set family
\begin{equation}\label{example_complete_system}
\cals = \{\emptyset,g,gh,gi,gj,gk,h,hi,hij,hijk,i,ij,ijk,j,k\}
\end{equation}
is a complete system of mixed generators.

For Propositions~\ref{second_prop_preserved},~\ref{mixgen_obtained_by_restriction} and~\ref{carac_not_a_mixgen_inL}, an arbitrary context $\context = \GMI$ and an object/attribute pair $g \in G$, $m \in M$ with $g\! \notI\! m$ are to be considered; furthermore, $\contextl$ denotes $\opgmk$. Proposition~\ref{second_prop_preserved} shows that a mixed generator $S$ in $\context$ is halfway from being a mixed generator in $\contextl$: in that context, $S$ always fulfills condition $ii)$ of the definition.

\begin{proposition} \label{second_prop_preserved}
  Let $R = \mic$ and let $S$ be a $R$-mixed generator in $\context$. Then, for every $h \in G$ with $h \notin S \cup R$ it holds that $(S \cup \{h\})^{JJ} \neq S^{JJ}$, where $J$ denotes derivation in $\contextl$. In particular, if $S\cap R = \emptyset$, then $S$ is a $R$-mixed generator in $\contextl$.
\end{proposition}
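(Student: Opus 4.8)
The plan is to reduce the statement to the production of a single separating attribute. Fix $h \in G$ with $h \notin S \cup R$. Since adjoining to $S$ an object that already lies in the extent $S^{JJ}$ leaves the extent unchanged, whereas adjoining one outside it strictly enlarges it, it suffices to prove $h \notin S^{JJ}$, i.e.\ to exhibit an attribute $n \in S^{J} \setminus h^{J}$. Before doing so I would record the two consequences of $h \notin R = \mic = G \setminus m^{I}$ that drive the whole argument: first, $m \in h^{I}$; and second, since $g \notI m$ forces $g \in R$, we also have $h \neq g$. These let me compute the relevant derivations in $\contextl$ explicitly, noting that $J$ differs from $I$ only in the row of $g$ (now $g^{J} = M \setminus \{m\}$) and in the column of $m$.

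Next I would extract a candidate separator from the \emph{original} context. Because $S$ is a $R$-mixed generator in $\context$ and $h \notin S \cup R$, condition $ii)$ of the definition applied to $h$ gives $(S \cup \{h\})^{II} \neq S^{II}$, hence $h \notin S^{II}$; equivalently $S^{I} \not\subseteq h^{I}$, so there is an attribute $n \in S^{I}$ with $n \notin h^{I}$. The crucial observation — and the step I expect to be the actual heart of the argument — is that this $n$ cannot be $m$: from $h \notin R$ we have $m \in h^{I}$, whereas $n \notin h^{I}$, so $n \neq m$. This is the only place where the hypothesis $h \notin R$ is genuinely used, and it is exactly what prevents the two drastic modifications in $\contextl$ (the nearly full row $g$ and the nearly full column $m$) from destroying the separation: the only attribute that could fail to separate $h$ from $S^{J}$ is $m$ itself, and $n \neq m$ sidesteps this.

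It then remains to transfer $n$ into the incidence $J$. I would verify $n \in S^{J}$ by testing each $s \in S$: for $s \neq g$ one has $s^{J} = s^{I} \cup \{m\} \supseteq s^{I} \ni n$, while if $g \in S$ then $g^{J} = M \setminus \{m\}$ contains $n$ precisely because $n \neq m$; hence $n \in \bigcap_{s \in S} s^{J} = S^{J}$. On the other hand $h \neq g$ yields $h^{J} = h^{I} \cup \{m\}$, and since $n \notin h^{I}$ and $n \neq m$ we obtain $n \notin h^{J}$. Thus $S^{J} \not\subseteq h^{J}$, so $h \notin S^{JJ}$ and $(S \cup \{h\})^{JJ} \neq S^{JJ}$, which is the main assertion. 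For the ``in particular'' clause I would assume $S \cap R = \emptyset$: condition $i)$ of the definition is then vacuously satisfied in $\contextl$, since there is no object in $S \cap R$ to test, while condition $ii)$ is exactly what was just established; hence $S$ is a $R$-mixed generator in $\contextl$.
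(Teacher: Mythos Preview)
Your proof is correct and follows essentially the same approach as the paper: use condition~$ii)$ in $\context$ to obtain a separating attribute $n \in S^I \setminus h^I$, then exploit $h \notin R$ to push the separation into $\contextl$. The paper is slightly terser---it argues via $\hic = \hjc$ and $S^J \supseteq S^I$ rather than isolating $n \neq m$ and checking $n \in s^J$ object by object---but the content is the same, and your treatment of the ``in particular'' clause is in fact more explicit than the paper's.
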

\begin{proof}
  Let $h \in G \setminus (S \cup R)$. The fact that $S$ is a mixgen in $\context$ implies $(S \cup \{h\})^{II} \neq S^{II}$. Hence, one has that $(S \cup \{h\})^I \neq S^I$, which is equivalent to $\hic \cap S^I \neq \emptyset$. Now, $h \notin R$ implies $\hic = \hjc$ and this, together with the fact that $S^J \supseteq S^I$, yields $\hjc \cap S^J \neq \emptyset$. That is equivalent to $(S \cup \{h\})^{JJ} \neq S^{JJ}$.
\end{proof}

The reader is maybe aware of the fact that \emph{minimal} generators form a downset: the removal of any element of a minimal generator yields another minimal generator. For mixed generators this is clearly not the case but, as expected, the same phenomenon happens when one removes elements which belong to $R$. Indeed, we have the proposition below. 

\begin{proposition} \label{mixgen_obtained_by_restriction}
  For every $R$-mixed generator $S$ and every $T \subseteq S \cap R$, it holds that $S \setminus T$ is a mixed generator. In particular if $R = \mic$, then $S \setminus R$ is a mixed generator in both contexts, $\context$ and $\contextl$. Moreover, $S \setminus R$ is an extent in $\context$ and, therefore, its unique mixed generator in that context. 
\end{proposition}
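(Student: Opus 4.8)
The plan is to verify directly that $U := S \setminus T$ satisfies both defining conditions of a mixed generator, exploiting only the monotonicity of the closure operator $(\cdot)^{II}$ together with two elementary set identities. First I would record the reformulations that, for a set $X \subseteq G$ and $g \in G$, one has $(X \cup \{g\})^{II} = X^{II}$ precisely when $g \in X^{II}$, and, for $g \in X$, that $(X \setminus \{g\})^{II} = X^{II}$ precisely when $g \in (X \setminus \{g\})^{II}$. In these terms, condition $i)$ reads \emph{$g \notin (S \setminus \{g\})^{II}$ for all $g \in S \cap R$}, and condition $ii)$ reads \emph{$g \notin S^{II}$ for all $g \notin S \cup R$}. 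The two set identities I need are $U \cup R = S \cup R$, which holds because $T \subseteq R$, and $U \cap R \subseteq S \cap R$, which is immediate from $U \subseteq S$.

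With these in hand the verification is short. For condition $ii)$, take $g \notin U \cup R = S \cup R$; since $S$ is a mixed generator we have $g \notin S^{II}$, and because $U \subseteq S$ gives $U^{II} \subseteq S^{II}$, it follows that $g \notin U^{II}$. For condition $i)$, take $g \in U \cap R \subseteq S \cap R$; since $S$ is a mixed generator we have $g \notin (S \setminus \{g\})^{II}$, and from $U \setminus \{g\} \subseteq S \setminus \{g\}$ we get $(U \setminus \{g\})^{II} \subseteq (S \setminus \{g\})^{II}$, whence $g \notin (U \setminus \{g\})^{II}$. Thus $U$ is a mixed generator, and since a general $T$ is handled at once, no induction on $|T|$ is needed.

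The three remaining assertions then follow by specialising $T$ and invoking earlier results. Choosing $T = S \cap R$ shows that $S \setminus R$ is a mixed generator in $\context$. Since $(S \setminus R) \cap R = \emptyset$, Proposition~\ref{second_prop_preserved} upgrades this to $S \setminus R$ being a mixed generator in $\contextl = \opgmk$ as well. Applying Proposition~\ref{equiv_mixgen_extensao_disjunto} to the same disjointness identifies $S \setminus R$ as an extent of $\context$, and Proposition~\ref{unique_mixgen} then guarantees that it is the unique mixed generator of itself in that context.

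I expect no single step to be genuinely hard: the argument is driven throughout by monotonicity of the closure in the right direction, and most of the work is the bookkeeping that $T \subseteq R$ makes $U$ and $S$ agree modulo $R$. The only place that demands care is keeping the two closure reformulations straight and applying the implication $X \subseteq Y \Rightarrow X^{II} \subseteq Y^{II}$ to the correct pair of sets — in particular, for condition $i)$ one must use $U \setminus \{g\} \subseteq S \setminus \{g\}$ rather than merely $U \subseteq S$.
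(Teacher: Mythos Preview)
Your proof is correct and follows essentially the same route as the paper: both verify conditions $i)$ and $ii)$ for $U=S\setminus T$ directly from those for $S$, using the inclusion $U\subseteq S$ and the identity $U\cup R=S\cup R$, and both derive the three final claims from Propositions~\ref{second_prop_preserved},~\ref{equiv_mixgen_extensao_disjunto} and~\ref{unique_mixgen}. The only cosmetic difference is that the paper phrases the argument on the derivation side (showing $(U\setminus\{h\})^I=U^I$ would force $(S\setminus\{h\})^I=S^I$, and $n\in S^I\subseteq U^I$ with $n\in\hic$), whereas you work on the closure side via monotonicity of $(\cdot)^{II}$; these are dual formulations of the same inference.
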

\begin{proof}
  Set $U = S \setminus T$ so that $S = U \cup T$ and $S^I = U^I \cap T^I$. First we prove that, if the condition $i)$ of mixed generators were not valid for $U$, then it would also not be valid for the set $S$. Suppose, therefore, that there exists $h \in U \cap R \subseteq S \cap R$ with $(U \setminus \{h\})^{I} = U^{I}$. Therefore, we have that $(S \setminus \{h\})^I = [T \cup (U \setminus \{h\})]^I = T^I \cap (U \setminus \{h\})^I = T^I \cap U^I = S^I$. Now, regarding condition $ii)$ of mixed generators, take $h \in G$ with $h \notin (U \cup R)$. Observe that $U \cap R \subseteq S \cap R$ together with $S \setminus R = U \setminus R$, $h \notin R$ and $h \notin U$ imply $h \notin S$. Since $S$ is a mixed generator, we have that $(S \cup \{h\})^I \neq S^I$, which means that there exists an attribute $n \in \hic$ with $n \in S^I \subseteq U^I$. Hence, $(U \cup \{h\})^I \neq U^I$. The three final claims (which require $R = \mic$) come from Propositions~\ref{second_prop_preserved}, ~\ref{equiv_mixgen_extensao_disjunto} and~\ref{unique_mixgen}.
\end{proof}

For a moment, suppose that $\B(\context)$ is finite and let $\cals$ denote a complete system of $\mic$-mixed generators in $\context$. Our goal is to find a sufficient condition for ${|\B(\contextl)| \geq |\B(\context)|}$, and the strategy to arrive at that shall be to reuse as much mixed generators from $\cals$ in $\contextl$ as possible. Aiming this, we establish now what is necessary and sufficient for a mixed generator in $\context$ \emph{not} to be a mixed generator in $\contextl$. As an example, let $\context$ denote the context of Figures~\ref{fig:context} and~\ref{fig:hypergraph} and consider the context $\contextl = op^{g,m}(\context)$, which is depicted in Figure~\ref{fig:hypergraph-l}. Observe that $\{h,i\}^J = \{i\}^J$, which means that $\{h,i\}$ is not\footnote{Because $h \in R$. Notice that the set $R$ is not, in any sense, ``updated''.} a mixed generator in $\contextl$. Moreover, notice that $\{h,i\}$ has the following three properties: first, it does not contain $g$. Second, its intersection with $R$ has cardinality one. Lastly, the derivation in $\context$ (and in $\contextl$) of $\{h,i\} \setminus R = \{h,i\} \setminus \{g,h\} = \{i\}$ equals $\{h,i\}^I \cup \{m\}$. Proposition~\ref{carac_not_a_mixgen_inL} shows that those properties are characteristic.

\begin{figure}[htbp]
  \centering
  \begin{tikzpicture}[x=1mm,y=1mm,scale=1.20]
    \node (m) at (10,20) {};
    \node (n) at (10,0) {};
    \node (o) at (20,15) {};
    \node (p) at (0,15) {};
    \node (q) at (40,16) {};
 
    \node (g) at ($(m.center) + (-3.5,4.5)$)  {$g$};
    \node (h) at ($(o.center) + (0,7.5)$) {$h$};    
    \node (i) at ($(q) + (-10,6)$) {$i$};
    \node (j) at ($(q) + (8,-8)$) {$j$};
    \node (k) [below=7mm of n] {$k$};

    \begin{scope}[fill opacity=0.5]
      \node[draw=black,fit=(m),inner sep=1ex,circle] (tmp) {};
      
      \node[draw=black,fit=(o),inner sep=1.65ex,circle] (tmp) {};
      
      \filldraw[rounded corners=8pt,fill=white!100] ($(o)+(-4,3)$) -- 
        ($(q) + (3,5)$) --
        ($(q) + (4.5,-2)$) --
        ($(o) + (-2.5,-4)$) -- cycle
        ;
      

      \filldraw[rounded corners=8pt,fill=white!100] ($(q.center)+(-5,7)$) -- 
        ($(q.center) + (5,7)$) --
        ($(q) + (5,-20)$) -- 
        ($(n) + (-5,-4)$) -- 
        ($(n) + (-5,4)$) -- 
        ($(n) + (25,4)$) -- cycle;


      \filldraw[rounded corners=8pt, fill=white!100] ($(n) + (-6,-6)$) --
        ($(p) + (-4,3)$) -- 
        ($(o) + (4,3)$) --
        ($(n) + (6,-6)$) -- cycle;      
    \end{scope}    
    \fill (m) node {$m$};
    \fill (n) node {$n$};
    \fill (o) node {$o$};
    \fill (p) node {$p$};
    \fill (q) node {$q$};
  \end{tikzpicture}
  \caption{Context $op^{g,m}(\context)$. The operation $op$ corresponds to shrinking one and cutting the other ellipses.}
  \label{fig:hypergraph-l}
\end{figure}
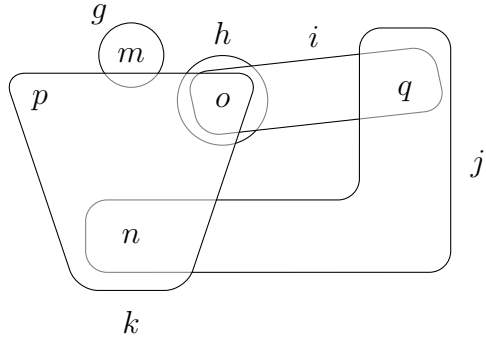

\begin{proposition}\label{carac_not_a_mixgen_inL}
  Let $R = \mic$ and let $S$ be a mixed generator in $\context$. Then, $S$ is not a mixed generator in $\contextl$ if and only if $S \cap R = \{h\}$ and $(S \setminus \{h\})^J = S^I \cup \{m\}$ for exactly one element $h \in R \setminus \{g\}$.
\end{proposition}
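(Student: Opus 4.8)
The plan is to reduce the statement to a purely computational comparison of $(\cdot)^J$-derivations, using that condition $ii)$ of the mixgen definition is automatically inherited in $\contextl$. Since $S$ is a mixed generator in $\context$, Proposition~\ref{second_prop_preserved} already guarantees condition $ii)$ for $S$ in $\contextl$; hence $S$ fails to be a mixed generator in $\contextl$ precisely when condition $i)$ breaks, that is, when there is some $h \in S \cap R$ with $(S \setminus \{h\})^{JJ} = S^{JJ}$, equivalently $(S \setminus \{h\})^J = S^J$. The whole proof thus amounts to determining exactly when such a witness $h$ exists.

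The key tool I would record first is the behaviour of $J$-intents. From the definition of $op^{g,m}$ one reads off $g^J = M \setminus \{m\}$, while every other object satisfies $h^J = h^I \cup \{m\}$. Consequently, for any $T \subseteq G$ with $g \notin T$ one obtains the clean identity $T^J = T^I \cup \{m\}$ (both the empty and nonempty cases check out, using $m \in h'^I$ for $h' \notin R$). When $g \in T$, the extra factor $g^J = M \setminus \{m\}$ only deletes $m$, giving $T^J = (T \setminus \{g\})^I \setminus \{m\}$. These two formulas will carry the entire argument.

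For the forward direction I would argue in two steps. First, rule out $g \in S$: the point is that if $g$ belonged to $S$, then every candidate witness $h \in S \cap R$ would in fact satisfy $(S \setminus \{h\})^J \neq S^J$, so $S$ would remain a mixed generator in $\contextl$. For $h = g$ this is immediate, since $m \in (S \setminus \{g\})^J$ but $m \notin S^J$; for $h \neq g$ one uses the second formula together with the mixgen property of $S$ in $\context$, which supplies an attribute $n \neq m$ lying in $(S \setminus \{h\})^I$ but not in $h^I$, and this $n$ separates $(S \setminus \{h\})^J$ from $S^J$ (here $n \neq m$ follows because $g \in S \setminus \{h\}$ forces $m \notin (S \setminus \{h\})^I$). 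Once $g \notin S$ is established, the first formula gives $S^J = S^I \cup \{m\}$ and $(S \setminus \{h\})^J = (S \setminus \{h\})^I \cup \{m\}$; the equality $(S \setminus \{h\})^J = S^J$ combined with $(S \setminus \{h\})^I \supsetneq S^I$ (condition $i)$ in $\context$) forces $(S \setminus \{h\})^I = S^I \cup \{m\}$, which is exactly the asserted identity $(S \setminus \{h\})^J = S^I \cup \{m\}$. Finally, $S \cap R = \{h\}$ drops out: any further $h'' \in S \cap R$ would lie in $S \setminus \{h\}$ and hence have $m \in (S \setminus \{h\})^I \subseteq h''^I$, contradicting $h'' \in R$.

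The backward direction is then short: the hypotheses $S \cap R = \{h\}$ and $h \in R \setminus \{g\}$ force $g \notin S$, so the first formula yields $S^J = S^I \cup \{m\}$, which together with the assumed $(S \setminus \{h\})^J = S^I \cup \{m\}$ gives $(S \setminus \{h\})^J = S^J$ and thus the failure of condition $i)$ at $h$. I expect the main obstacle to be precisely the exclusion of $g \in S$ in the forward direction: this is the one place where the clean identity $T^J = T^I \cup \{m\}$ is unavailable, and handling it requires combining the ``$g$ deletes $m$'' computation with a careful extraction, from the mixgen property of $S$ in $\context$, of a distinguishing attribute different from $m$.
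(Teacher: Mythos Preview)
Your proof is correct and follows essentially the same strategy as the paper: reduce to condition~$i)$ via Proposition~\ref{second_prop_preserved}, then analyse the witness $h$ using the identity $T^J = T^I \cup \{m\}$ for $g \notin T$. The only structural difference is in the ordering: you first exclude $g \in S$ by a separate case analysis on potential witnesses (including the case $g \in S$, $h \neq g$, which needs the distinguishing attribute $n \neq m$), and only then pick the witness; the paper instead picks the witness $h$ immediately, argues $h \neq g$, and deduces $S \cap R = \{h\}$ (hence $g \notin S$) from the mixgen property in $\context$, so it never has to treat the case $g \in S$, $h \neq g$ directly. Your route is slightly longer but arguably more transparent, since once $g \notin S$ is in hand every $J$-derivation is governed by the single clean formula.
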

\begin{proof}
  Let $S$ be a mixed generator in $\context$. Suppose that $S \cap R = \{h\}$ with $h \neq g$ and that $(S \setminus \{h\})^J = S^I \cup \{m\}$. Since $g \notin S$, it follows from the defintion of $op$ that $S^J = S^I \cup \{m\}$. By transitivity, it holds that $(S \setminus \{h\})^J = S^J$ and $S$ is not a mixgen in $\contextl$. For the converse, by Proposition~\ref{mixgen_obtained_by_restriction}, we have that $S \cap R \neq \emptyset$ and, by Proposition~\ref{second_prop_preserved} it follows that $S$ is not a mixgen in $\contextl$ because it fails to fulfill condition $i)$ of mixgens. That means that there exists $h \in S \cap R$ with $(S \setminus \{h\})^J = S^J$. This shows $S \cap R \supseteq \{h\}$. Note that, since $g$ is the only object without the attribute $m$ in $\contextl$, we have that $h \neq g$ and that $h^J = h^I \cup \{m\}$. From $h \in (S \setminus \{h\})^{JJ}$ follows $(S \setminus \{h\})^J \subseteq h^J = h^I \cup \{m\}$. Moreover, note that $(S \setminus \{h\})^I \subseteq (S \setminus \{h\})^J$ and, by transitivity, $(S \setminus \{h\})^I \subseteq h^I \cup \{m\}$. We now argue that $S \cap R \subseteq \{h\}$. Suppose, by contradiction, that $i \in S \cap R$ with $i \neq h$. Then, $i \in (S \setminus \{h\})$ which implies $m \notin (S \setminus \{h\})^I$. Therefore, $(S \setminus \{h\})^I \subseteq h^I$ which yields $(S \setminus \{h\})^{II} = S^{II}$, contradicting the fact that $S$ is a mixed generator in $\context$. Since $S \cap R = \{h\}$ and $h \neq g$, we have that $S^J = S^I \cup \{m\}$. By transitivity, $S^I \cup \{m\} = (S \setminus \{h\})^J$.
\end{proof}

Keep considering, until Proposition~\ref{prop-semidownset}, an arbitrary context $\context$, an attribute $m \in M$ with $\mic \neq \emptyset$ and suppose that $\cals$ is a complete system of $\mic$-mixed generators in $\context$. Set, for good, $R = \mic$, consider a fixed object $g \in R$ and define $\contextl = \opgmk$. 

We divide $\cals$ in four classes:

$$
\begin{aligned}
  \cals = \caln \cup \cala \cup \calb \cup \calc,
\end{aligned}
$$
where
$$
\begin{aligned}
  \caln &= \{S \in \cals \mid S \mbox{ is not a mixgen in } \contextl \},\\
  \cala &= \{S \in \cals \setminus \caln \mid S^J = S^I\},\\ 
  \calb &= \{S \in \cals \setminus \caln \mid S^J \neq S^I, (S \cup \{g\})^J = S^I\} \mbox{ and }\\ 
  \calc &= \{S \in \cals \setminus \caln \mid S^J \neq S^I, (S \cup \{g\})^J \neq S^I\}.
\end{aligned}
$$

Notice that whenever $S \in \calb$, it holds that $g \notin S$. In contrast, a mixed generator $S \in \calc$ always contains $g$. To see this, it suffices to realize that a subset $T \subseteq G \setminus \{g\}$ always satisfies (exactly) one of the equalities $T^J = T^I$ and $(T \cup \{g\})^J = T^I$.

Mixgens in $\cala$ and in $\calb$ are sufficiently manageable so that we may map them directly to the set of all mixed generators of $\contextl$ and hope that they form a representative system. We will ``rescue'' all the mixed generators in $\caln$ and some in $\calc$ by applying the restriction mapping $res: S \mapsto S \setminus R$. Since $S \setminus R$ is always a mixgen in $\contextl$ (provided $S$ is a mixgen in $\context$), it is easy to realize that $res(\caln) \subseteq \cala$ and $res(\calc) \subseteq \cala$: indeed, in $\context$, the set $S \setminus R$ is the unique mixed generator of itself (cf. Proposition~\ref{mixgen_obtained_by_restriction}), which forces $S \setminus R \in \cals$ whenever $\cals$ is a complete system of mixgens. Also, the equality $(S \setminus R)^I = (S \setminus R)^J$ is obvious.

The operation $op$ changes the derivation of objects in $R$. Therefore, it makes sense to devote special attention to mixgens which have non-empty intersection with $R$. In contrast, we need a condition which is stronger than $h \in R \setminus S$ to help us identify mixgens which are largely unaffected by the operation $op$. Let $S \subseteq G$ and $h \in G$. We say that $S$ \emph{strongly avoids $h$} if $S^I \cap (\hic \setminus \{m\}) \neq \emptyset$. That is, $h$ does not belong to $(S^I \setminus \{m\})^I$ and, in particular, neither to $S^{II}$ or to $S$. Further, we define:
$$\chi(S) = \{g \in R \mid S \mbox{ strongly avoids } g \}.$$
The following claim follows directly from the antitone property of the derivation operator:

\begin{proposition}\label{chi_antitono}
  Let $S,T \subseteq G$ with $S \subseteq T$. Then, $\chi(T) \subseteq \chi(S)$.
\end{proposition}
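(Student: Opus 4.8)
The plan is to reduce the statement directly to the antitone behaviour of the derivation operator, since the text itself signals that this is all that is involved. Recall that an object $g \in R$ lies in $\chi(S)$ exactly when $S$ strongly avoids $g$, that is, when $S^I \cap (\gic \setminus \{m\}) \neq \emptyset$. Hence membership in $\chi(\cdot)$ is governed entirely by the intent of its argument: the larger the intent, the more objects of $R$ can be strongly avoided.

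First I would record the elementary property that $S \subseteq T$ implies $T^I \subseteq S^I$; this is the order-reversing half of the Galois connection underlying derivation and is the only substantive ingredient. Then I would pick an arbitrary $g \in \chi(T)$ and unfold the definition to obtain an attribute $n \in T^I \cap (\gic \setminus \{m\})$. Because $T^I \subseteq S^I$, the same $n$ witnesses $S^I \cap (\gic \setminus \{m\}) \neq \emptyset$, so $S$ strongly avoids $g$ as well. As the set $R$ is the same for both arguments, $g \in R$ is retained, yielding $g \in \chi(S)$ and therefore $\chi(T) \subseteq \chi(S)$.

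I do not anticipate any genuine obstacle here: once intents are known to grow as the generating set shrinks, the argument collapses to the one-line set inclusion $X \cap Z \subseteq Y \cap Z$, valid whenever $X \subseteq Y$. The only point I would be careful to state explicitly is that the subtracted singleton $\{m\}$ and the ambient set $R$ are fixed and identical on both sides of the definition, so neither of those can reverse the direction of the containment.
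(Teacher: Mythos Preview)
Your argument is correct and is exactly what the paper intends: the paper states that the claim ``follows directly from the antitone property of the derivation operator'' and gives no further details, so your unfolding of the definition together with $S \subseteq T \Rightarrow T^{I} \subseteq S^{I}$ is precisely the expected one-line verification.
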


We shall be able to ``rescue'' mixed generators in $\calc$ which contain each element of $R$. Define:
$$
\begin{aligned}
  \calcr = \{S \in \calc \mid R \subseteq S\} \mbox{ and }
  \calcnr = \{S \in \calc \mid R \nsubseteq S\}.
\end{aligned}
$$  

It turns out that the function $\chi$ is able to distinguish the image sets $res(\caln)$ and $res(\calcr)$, as the following two propositions show.

\begin{proposition}
  Let $S \in \calcr$. Then, $\chi(S \setminus R) = R$.
\end{proposition}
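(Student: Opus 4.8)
The plan is to prove the two inclusions of $\chi(S\setminus R)=R$ separately. The inclusion $\chi(S\setminus R)\subseteq R$ is immediate from the definition of $\chi$, so the whole content lies in showing $R\subseteq\chi(S\setminus R)$: I would fix an arbitrary $h\in R$ and establish that $S\setminus R$ strongly avoids $h$, i.e. that $(S\setminus R)^I\cap(\hic\setminus\{m\})\neq\emptyset$. Two preliminary observations are used throughout. First, since $R\subseteq S$ and $g\in R$, the object $g$ belongs to $S$. Second, for every $h\in R$ one has $S\setminus R\subseteq S\setminus\{h\}$, so by antitonicity of the derivation $(S\setminus\{h\})^I\subseteq(S\setminus R)^I$. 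Hence it suffices to exhibit, for each $h\in R$, an attribute $n\neq m$ with $n\in(S\setminus\{h\})^I$ and $n\in\hic$.

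For $h\in R\setminus\{g\}$ I would appeal to the fact that $S$ is a mixed generator in $\context$: because $h\in S\cap R$, condition $i)$ gives $(S\setminus\{h\})^I\neq S^I$, and since removing an object can only enlarge the intent, there is an attribute $n\in(S\setminus\{h\})^I\setminus S^I$. As every object of $S\setminus\{h\}$ carries $n$ while some object of $S$ does not, that object must be $h$, giving $n\in\hic$. It remains to rule out $n=m$: here I would use that $g\in S\setminus\{h\}$ (as $g\neq h$) and $g\in R=\mic$, so $m\notin g^I$ and therefore $m\notin(S\setminus\{h\})^I$; consequently $n\neq m$, and the preliminary observations finish this case.

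The case $h=g$ is where the mixed-generator condition alone is insufficient, since the distinguishing attribute it produces could be exactly $m$ (this genuinely happens when $R=\{g\}$), and this is the main obstacle. To circumvent it I would invoke the defining property of the class $\calc$, namely $S^J\neq S^I$. Using $g\in S$ and the definition of $op$, I would compute $S^J=(S\setminus\{g\})^I\setminus\{m\}$ and $S^I=g^I\cap(S\setminus\{g\})^I$; since $m\notin g^I$ one always has $S^I\subseteq S^J$, so $S^J\neq S^I$ forces the containment to be strict. Any witnessing attribute $n\in S^J\setminus S^I$ then lies in $(S\setminus\{g\})^I$, satisfies $n\neq m$, and (being in $(S\setminus\{g\})^I$ but not in $S^I=g^I\cap(S\setminus\{g\})^I$) satisfies $n\notin g^I$, i.e. $n\in\gic$. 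Combined with $(S\setminus\{g\})^I\subseteq(S\setminus R)^I$, this shows that $S\setminus R$ strongly avoids $g$, completing the proof. Note that the hypothesis $S\notin\caln$ is not needed for this proposition; only membership of $S$ in $\cals$ and the inequality $S^J\neq S^I$ are used.
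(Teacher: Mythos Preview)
Your proof is correct and follows essentially the same approach as the paper: both use the mixed-generator condition $i)$ to produce a witness $n\in(S\setminus\{h\})^I\cap\hic$ with $n\neq m$ when another element of $R$ remains in $S\setminus\{h\}$, and both fall back on the hypothesis $S^J\neq S^I$ to handle the residual case. The only cosmetic difference is the case split: the paper distinguishes $|R|\ge 2$ versus $|R|=1$, whereas you distinguish $h\neq g$ versus $h=g$; your split is slightly cleaner since the presence of $g$ in $S\setminus\{h\}$ is exactly what forces $m\notin(S\setminus\{h\})^I$, and your explicit computation $S^J=(S\setminus\{g\})^I\setminus\{m\}$ makes the $h=g$ step more transparent than in the paper.
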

\begin{proof}
  Set $T = S \setminus R$ and suppose that $|R| \geq 2$. Let $h \in R$. Note that $m \notin (S \setminus \{h\})^I$. Because $S$ is a mixed generator and $h \in R \subseteq S$, it follows that $(S \setminus \{h\})^I \neq S^I$. Therefore, there exists $n \in (S \setminus \{h\})^I$ such that $n \in \hic$ and $n \neq m$. In particular, $(S \setminus \{h\})^I \cap (\hic \setminus \{m\}) \neq \emptyset$, which is to say that the set $S \setminus \{h\}$ strongly avoids $h$. That is, $h \in \chi(S \setminus \{h\}) \subseteq \chi(S \setminus R)$, where the containment follows from Proposition~\ref{chi_antitono}. 
Since the object $h$ was arbitrary, we have that $\chi(S \setminus R) = R$. For the remaining case, necessarily $R = \{g\}$. The condition $S^J \neq S^I$ allows us to take an attribute $n \in S^J \setminus S^I$. Clearly $n \in \gic$, because $g$ is the only object whose derivation with respect to $J$ and $I$ differ. Similarly, $n \in h^I$ for every $h \in S \setminus \{g\}$ because of $n \in S^J$. Note that $S \in \calcr$ forces $g \in S$ which in turn implies $m \notin S^J$ and, as a consequence, $n \neq m$. Combining those assertions we arrive, in particular, at $(S \setminus \{g\})^I \cap (\gic \setminus \{m\}) \neq \emptyset$, that is, $g \in \chi(S \setminus \{g\})$. 
\end{proof}

\begin{proposition} 
  Let $S \in \caln$. Then, $\chi(S \setminus R) \neq R$.
\end{proposition}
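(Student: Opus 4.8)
The plan is to prove the implication directly by exhibiting a single element of $R$ that fails to lie in $\chi(S \setminus R)$. Since $\chi(\cdot) \subseteq R$ holds by definition, producing one such witness immediately gives $\chi(S\setminus R) \neq R$. The natural candidate for that witness is the distinguished object $h$ supplied by the characterization of $\caln$.

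First I would invoke Proposition~\ref{carac_not_a_mixgen_inL}: because $S \in \caln$, there is exactly one $h \in R \setminus \{g\}$ with $S \cap R = \{h\}$ and $(S \setminus \{h\})^J = S^I \cup \{m\}$. In particular $S \setminus R = S \setminus \{h\}$, so it suffices to show that $S \setminus \{h\}$ does \emph{not} strongly avoid $h$, i.e. that $(S \setminus \{h\})^I \cap (\hic \setminus \{m\}) = \emptyset$.

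The key computation controls $(S \setminus \{h\})^I$. Since passing from $I$ to the larger relation $J$ only enlarges derivations, one has $(S \setminus \{h\})^I \subseteq (S \setminus \{h\})^J = S^I \cup \{m\}$. Hence any attribute $n \in (S \setminus \{h\})^I$ with $n \neq m$ already belongs to $S^I$; and because $h \in S$ forces $S^I \subseteq h^I$, such an $n$ lies in $h^I$, i.e. $n \notin \hic$. Thus no attribute witnesses strong avoidance of $h$, so $h \notin \chi(S \setminus \{h\}) = \chi(S \setminus R)$. As $h \in R$, this yields $\chi(S \setminus R) \neq R$.

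The only real obstacle is the step bounding $(S\setminus\{h\})^I$: a priori, deleting $h$ from $S$ could enlarge the derivation by attributes lying outside $h^I$, and any such attribute other than $m$ would make $S \setminus \{h\}$ strongly avoid $h$ and break the argument. The content of Proposition~\ref{carac_not_a_mixgen_inL} is exactly what rules this out, by pinning $(S\setminus\{h\})^J$ down to $S^I \cup \{m\}$; everything else is bookkeeping with the antitone derivation and the inclusion $S^I \subseteq h^I$.
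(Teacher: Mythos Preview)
Your proof is correct and follows essentially the same route as the paper's: invoke Proposition~\ref{carac_not_a_mixgen_inL} to get $S\cap R=\{h\}$ and $(S\setminus\{h\})^J=S^I\cup\{m\}$, deduce that $(S\setminus\{h\})^I\subseteq S^I\cup\{m\}$, and conclude that $S\setminus\{h\}$ does not strongly avoid $h$. The only cosmetic difference is that the paper observes the \emph{equality} $(S\setminus\{h\})^I=(S\setminus\{h\})^J$ (valid because $(S\setminus\{h\})\cap R=\emptyset$), whereas you use the weaker inclusion coming from $I\subseteq J$; either suffices.
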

\begin{proof}
  By Proposition~\ref{carac_not_a_mixgen_inL}, it follows that $S \cap R = \{h\}$ with $h \neq g$ and $(S \setminus \{h\})^J = S^I \cup \{m\}$. Of course, $(S \setminus \{h\})^I = (S \setminus \{h\})^J$ and by transitivity, $(S \setminus \{h\})^I = S^I \cup \{m\}$. Thus, the only attribute $n$ satisfying $n \in \hic$ and $n \in (S \setminus \{h\})^I$ is $n = m$. Consequently, the intersection between $\hic \setminus \{m\}$ and $(S \setminus \{h\})^I$ is empty, that is, the set $S \setminus \{h\} = S \setminus R$ does not strongly avoid $h$.
\end{proof}

In order to organize to which subset of $\cala$ the restriction mapping maps to, we partition the class $\cala$ in three:
$$
\begin{aligned}
  \caladoispre &= res(\caln)\\
  \calar &= \{S \in \cala \mid \chi(S) = R\},\\
  \calaumpre &= \{S \in \cala \mid \chi(S) \neq R, S \notin res(\caln)\}.
\end{aligned}
$$
Suppose that $S$ is a mixed generator in $\contextl$. Proposition~\ref{mixgen_extremal} guarantees that $S \cup \{g\}$ is a mixed generator in $\contextl$ as well. Thus, we may consider each mixed generator $S \in \cals \setminus \caln$ in pairs $(S, S \cup \{g\})$. Such pairs potentially collapse, i.e., it could be that $g \in S$ and, to avoid this, we only consider a mixgen $S$ in this pairwise way when certainly $S$ does not contain $g$ already: more specifically, when ${S \in \caladoispre \cup \calar \cup \calb}$. The general situation regarding the seven partition classes we defined, together with the relevant mappings is as retracted in Figure~\ref{fig:decomp_gran_max}. An example of such a decomposition is given in Figure~\ref{fig:exemplo_decomposicao_sistema_completo1}. Notice that such a partition of $\cals$ depends on $\contextl$, which in turn depends on the choice of $m$ and of $g \in \mic$.

\begin{figure}[htbp]
  \centering
  \begin{tikzpicture}
    [x=1mm,y=1mm,classe/.style={rectangle, draw=white!50, inner sep=0pt, minimum width=4mm, minimum height=5mm},
      classegrande/.style={rectangle,draw=white!50, inner sep=0pt, minimum width=18mm, minimum height=5mm}]
    \node at (0,0) (cals) {$\cals = $}; 
    \node[classe] at (7,0) (caln) {$\caln$};
    \node at (13,0) (cup1) {$\cup$}; 
    \node[classe] at (22,0) (calaum) {$\calaumpre$}; 
    \node (cup2) [right=1mm of calaum] {$\cup$}; 
    \node[classegrande] (caladois) [right=2mm of cup2] {$\caladoispre$}; 
    \node (cup3) [right=1mm of caladois] {$\cup$}; 
    \node[classegrande] (calar) [right=2mm of cup3] {$\calar$}; 
    \node (cup4) [right=0mm of calar] {$\cup$}; 
    \node[classegrande] (calb) [right=2mm of cup4] {$\!\!\!\!\calb$}; 
    \node (cup5) [right=-1mm of calb] {$\cup$}; 
    \node[classe] (calcr) [right=0mm of cup5] {$\calcr$}; 
    \node (cup6) [right=0mm of calcr] {$\cup$}; 
    \node[classe] (calcnr) [right=0mm of cup6] {$\calcnr$}; 
    
    \node[classe] (eldom1) [below=5mm of calaum] {$S$};     
    \node (ni1) [below=8mm of calaum.40,rotate=-90] {$\ni$};
    
    \node[classe] (eldom2) [below=5mm of caladois.south west] {$S$};     
    \node (ni2) [below=10mm of caladois.150,rotate=-135] {$\ni$};
    
    \node[classe] (eldom3) [below=5mm of calar.south west] {$S$};     
    \node (ni3) [below=10mm of calar.150,rotate=-135] {$\ni$};
    
    \node[classe] (eldom4) [below=5mm of calb.south west] {$S$};     
    \node (ni4) [below=10mm of calb.150,rotate=-135] {$\ni$};
    
    \node (ph1) [below=25mm of calaum.south] {$S$};

    \node (ph2) [below=27.7mm of caladois.west] {$S$};
    \node (ph3) [below=30mm of caladois.35] {$S \cup \{g\}$};
    
    \node (ph4) [below=27.7mm of calar.west] {$S$};
    \node (ph5) [below=30mm of calar.35] {$S \cup \{g\}$};
    
    \node (ph6) [below=27.7mm of calb.west] {$S$};
    \node (ph7) [below=30mm of calb.35] {$S\cup \{g\}$};
    
    \draw (eldom1) edge[|->] (ph1);

    \draw (eldom2.south) edge[distance=0.6cm,out=270,in=90,|->] (ph2);
    \draw (eldom2.east) edge[distance=0.9cm,out=360,in=90,|->] (ph3.north);
    
    \draw (eldom3.south) edge[distance=0.6cm,out=270,in=90,|->] (ph4);
    \draw (eldom3.east) edge[distance=0.9cm, out=360,in=90,|->] (ph5.north);
    
    \draw (eldom4.south) edge[distance=0.6cm, out=270,in=90,|->] (ph6);
    \draw (eldom4.east) edge[distance=0.9cm, out=360,in=90,|->] (ph7.north);
    
    \draw[rounded corners=8pt,->>] (caln.north) -- ($(caln.north) + (0,7)$) -- ($(caladois.north) + (0,7)$) -- (caladois.north);
    \draw[rounded corners=8pt,right hook->] (calcr.north) -- ($(calcr.north) + (0,7)$) -- ($(calar.north) + (0,7)$) -- (calar.north);
    
    \node (ressurj) [above left=7mm and -1mm of cup2, scale=0.88] {$res$};
    \node (resinj) [above right=7mm and 2mm of cup4, scale=0.88] {$res$};
  \end{tikzpicture}
  \caption{Decomposition of a complete system of $R$-mixed generators with $R = G \setminus m^I$.}
  \label{fig:decomp_gran_max}
\end{figure}
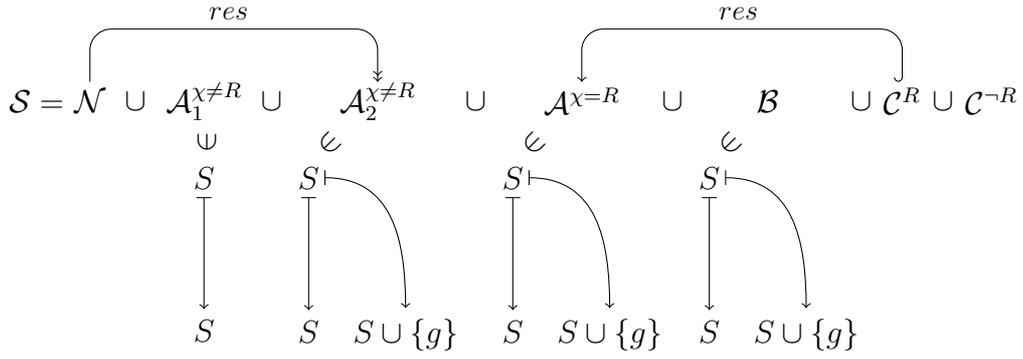

The reader has probably noticed from the characterization of elements in $\caln$ that the derivation $[res(S)]^J$ is not much different from $S^I$ whenever $S \in \caln$. Indeed, they differ only by the presence of the attribute $m$. As a result, the restriction mapping is injective when applied to that class:

\begin{proposition}\label{prop_res_inj}
  The restriction mapping is injective when applied to $\caln$.
\end{proposition}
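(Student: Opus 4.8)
The plan is to take arbitrary $S_1, S_2 \in \caln$ with $res(S_1) = res(S_2)$ and deduce $S_1 = S_2$. First I would apply Proposition~\ref{carac_not_a_mixgen_inL}: for each $i \in \{1,2\}$ it furnishes a unique $h_i \in R \setminus \{g\}$ with $S_i \cap R = \{h_i\}$ and $(S_i \setminus \{h_i\})^J = S_i^I \cup \{m\}$. Since $S_i \cap R = \{h_i\}$, the restriction is $res(S_i) = S_i \setminus R = S_i \setminus \{h_i\}$; writing $T := res(S_1) = res(S_2)$, we have $S_1 = T \cup \{h_1\}$ and $S_2 = T \cup \{h_2\}$, so the task reduces to proving $h_1 = h_2$.

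The key step is to reconstruct $S_i^I$ from $T$ alone. Substituting $S_i \setminus \{h_i\} = T$ into the characterization gives $T^J = S_i^I \cup \{m\}$ for both $i$, so that $S_1^I \cup \{m\} = T^J = S_2^I \cup \{m\}$. Now I would use that $h_i \in R = \mic = G \setminus m^I$, whence $m \notin h_i^I$ and therefore $m \notin S_i^I$ (because $h_i \in S_i$ and the intent of $S_i$ is the intersection of the object intents). Removing $m$ from both sides of $S_1^I \cup \{m\} = S_2^I \cup \{m\}$ then yields $S_1^I = S_2^I$, and consequently $S_1^{II} = S_2^{II}$.

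Finally, since $\cals$ is a complete system of mixed generators, the closure map $S \mapsto S^{II}$ is injective on $\cals$; as $S_1$ and $S_2$ lie in $\caln \subseteq \cals$ and have equal closures, we conclude $S_1 = S_2$, which establishes injectivity of $res$ on $\caln$.

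The crux of the argument is the observation that $res(S)^J = S^I \cup \{m\}$ pins down $S^I$ exactly---an identity that rests on $m \notin S^I$, itself a consequence of $S$ containing the $m$-free object $h \in R$. I do not anticipate a genuine obstacle; the only point demanding care is to justify that $res(S) = S \setminus \{h\}$ before applying the characterization's formula for $(S \setminus \{h\})^J$, which is immediate from $S \cap R = \{h\}$.
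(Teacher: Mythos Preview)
Your argument is correct and follows essentially the same route as the paper's proof: invoke Proposition~\ref{carac_not_a_mixgen_inL} to get $S_i\cap R=\{h_i\}$ and $(S_i\setminus R)^J=S_i^I\cup\{m\}$, use $h_i\in R$ to ensure $m\notin S_i^I$, cancel $m$ to obtain $S_1^I=S_2^I$, and then appeal to the injectivity of $S\mapsto S^{II}$ on the representative system $\cals$. One tiny remark: you set up the reduction to $h_1=h_2$ but in the end conclude $S_1=S_2$ directly from the representative-system property (which is fine, and is exactly what the paper does); also, only the \emph{representative} (injective) part of $\cals$ is used, not completeness.
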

\begin{proof}
  Let $S,T \in \caln$ and suppose that $S \setminus R = T \setminus R$. By Proposition~\ref{carac_not_a_mixgen_inL}, one has that $|S \cap R| = |T \cap R| = 1$ and, in particular, $m \notin S^I \cup T^I$. Moreover, Proposition~\ref{carac_not_a_mixgen_inL} implies $(S \setminus R)^J = S^I \cup \{m\}$ as well as $(T \setminus R)^J = T^I \cup \{m\}$. From $S \setminus R = T \setminus R$ follows $S^I \cup \{m\} = T^I \cup \{m\}$ and $m \notin S^I \cup T^I$ yields $S^I = T^I$. Since $\caln$ is a representative system of mixgens, this forces $S = T$.
\end{proof}

Consider the context $\context$ depicted earlier in Figures~\ref{fig:context} and~\ref{fig:hypergraph}. Further, consider the complete system of mixed generators $\cals$ given in~\eqref{example_complete_system}. Figure~\ref{fig:exemplo_decomposicao_sistema_completo1} illustrates the partition described in Figure~\ref{fig:decomp_gran_max} with $\contextl = op^{g,m}(\context)$. Note that, in $\context$, $R = \{g,h\}$ and, therefore, when keeping $m$ fixed, we have two (potentially different) decompositions: one for each choice of $\contextl$. To ease the notation we shall, from now on, omit the superscript $\chi \neq R$ and write only $\calaum$ and $\caladois$ whenever we partition $\cala$ in three (that is, $\cala = \calaum \cup \caladois \cup \calar$).

\begin{figure}[htbp]
  \centering
  \begin{tikzpicture}
    [x=1mm,y=1mm,classe/.style={rectangle, thick, draw=black!50, minimum width=10mm, minimum height=18mm}]    
    \node[classe, align=left] (caln) at (0,0) {$hi$\\$hij$\\$hijk$}; 
    \node[classe, align=left, right=5mm of caln] (calaum) {$k$\\$gk$}; 
    \node[classe, align=left, right=5mm of calaum] (caladois) {$i$\\$ij$\\$ijk$}; 
    \node[classe, align=left, right=5mm of caladois] (calar) {$\emptyset$\\$j$}; 
    \node[classe, align=left, right=5mm of calar] (calb) {$h$}; 
    \node[classe, align=left, right=5mm of calb] (calcr) {$gh$}; 
    \node[classe, align=left, right=5mm of calcr] (calcnr) {$g$\\$gi$\\$gj$}; 

    \node[below=2mm of caln.south] {$\caln$};
    \node[below=2mm of calaum.south] {$\calaum$};
    \node[below=2mm of caladois.south] {$\caladois$};
    \node[below=2mm of calar.south] {$\calar$};
    \node[below=2mm of calb.south] {$\calb$};
    \node[below=2mm of calcr.south] {$\calcr$};
    \node[below=2mm of calcnr.south] {$\calcnr$};

    \draw[rounded corners=8pt, semithick, left hook->>] (caln.north) -- ($(caln.north) + (0,5)$) -- ($(caladois.north) + (0,5)$) -- (caladois.north);
    \draw[rounded corners=8pt, semithick, right hook->] (calcr.north) -- ($(calcr.north) + (0,5)$) -- ($(calar.north) + (0,5)$) -- (calar.north);
    \node[above=5mm of calaum, scale=0.88] (letreirores1)  {$res$};
    \node[above=5mm of calb, scale=0.88] (letreirores2)  {$res$};
  \end{tikzpicture}

  \caption{Decomposition of the complete system of mixed generators given in~\eqref{example_complete_system} with the choice $\contextl = op^{g,m}(\context)$.} 
  \label{fig:exemplo_decomposicao_sistema_completo1}
\end{figure}

Regarding the same complete system of mixgens but with a different choice of $\contextl$, namely $\contextl = op^{h,m}(\context)$, we have the decomposition depicted in Figure~\ref{fig:exemplo_decomposicao_sistema_completo2}. 

\begin{figure}[htbp]
  \centering
  \begin{tikzpicture}
    [x=1mm,y=1mm,classe/.style={rectangle, draw=black!50, thick, minimum width=10mm, minimum height=18mm}]    
    \node[classe, align=left] (caln) at (0,0) {$gk$}; 
    \node[classe, align=left, right=5mm of caln] (calaum) {$hi,i$\\$hij,ij$\\$hijk,ijk$}; 
    \node[classe, align=left, right=5mm of calaum] (caladois) {$k$}; 
    \node[classe, align=left, right=5mm of caladois] (calar) {$\emptyset$\\$j$}; 
    \node[classe, align=left, right=5mm of calar] (calb) {$g$\\$gi$\\$gj$}; 
    \node[classe, align=left, right=5mm of calb] (calcr) {$gh$}; 
    \node[classe, align=left, right=5mm of calcr] (calcnr) {$h$}; 

    \node[below=2mm of caln.south] {$\caln$};
    \node[below=2mm of calaum.south] {$\calaum$};
    \node[below=2mm of caladois.south] {$\caladois$};
    \node[below=2mm of calar.south] {$\calar$};
    \node[below=2mm of calb.south] {$\calb$};
    \node[below=2mm of calcr.south] {$\calcr$};
    \node[below=2mm of calcnr.south] {$\calcnr$};

    \draw[rounded corners=8pt, semithick, left hook->>] (caln.north) -- ($(caln.north) + (0,5)$) -- ($(caladois.north) + (0,5)$) -- (caladois.north);
    \draw[rounded corners=8pt, semithick, right hook->] (calcr.north) -- ($(calcr.north) + (0,5)$) -- ($(calar.north) + (0,5)$) -- (calar.north);
    \node[above=5mm of calaum, scale=0.88] (letreirores1)  {$res$};
    \node[above=5mm of calb, scale=0.88] (letreirores2)  {$res$};
  \end{tikzpicture}
  \caption{Decomposition of the complete system of mixed generators given in~\eqref{example_complete_system} with the choice $\contextl = op^{h,m}(\context)$.}
  \label{fig:exemplo_decomposicao_sistema_completo2}
\end{figure}

Before we consider the images of the mappings present in Figure~\ref{fig:decomp_gran_max} and establish whether they form a representative system in $\contextl$ or not, we shall turn our attention for a moment to the closure operator in $\contextl$. More specifically, we shall relate it to the closure operator in $\context$ and to the function $\chi$. For that, we define 
$$\overline{\chi}(S) = R \setminus \chi(S).$$
The functions $\chi$ (and therefore $\overline{\chi}$) are to be calculated always regarding the original incidence relation, that is, $I$. 
\begin{proposition}\label{prop_auxiliar_extensao} Let $S \in \caln \cup \cala \cup \calb \cup \calcr$. Then:
$$
  \begin{aligned}
    S \in \calcr &\mbox{ $\Rightarrow$ } S = S^{II} = S^{JJ},\\
    S \in \cala &\mbox{ $\Rightarrow$ } S \subseteq S^{II} \subseteq S^{JJ},\\
    S \in \caln \cup \calb &\mbox{ $\Rightarrow$ } S \subseteq S^{II} \setminus \{g\} \subseteq S^{JJ}.
  \end{aligned}
$$
Furthermore, ${S^{JJ} \setminus S^{II} \subseteq \overline{\chi}(S)}$ and $S^{II} \setminus S \subseteq \overline{\chi}(S)$.
\end{proposition}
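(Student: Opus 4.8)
The plan is to compute the object intents in $\contextl = \opgmk$ explicitly and reduce every assertion to elementary containments of attribute sets. In $\contextl$ one has $g^J = M\setminus\{m\}$ and $h^J = h^I\cup\{m\}$ for every $h\neq g$; consequently, for any $S$ with $g\notin S$ we get $S^J = S^I\cup\{m\}$, and since then $m\in S^J$ while $m\notin g^J$, we always have $g\notin S^{JJ}$ in that case. I would record two auxiliary facts used throughout: (a) because $I\subseteq J$, every attribute set $B$ satisfies $B^I\subseteq B^J$; and (b) for any $R$-mixed generator $S$ one has $S^{II}\setminus S\subseteq R$, which is exactly the argument in the proof of Proposition~\ref{prop-mixgen-extents} (if $h\in S^{II}\setminus S$ then $(S\cup\{h\})^{II}=S^{II}$, so condition $ii)$ forces $h\in S\cup R$, hence $h\in R$). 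With these in hand I would treat the three implications class by class and then the two final inclusions uniformly.

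For the three implications I would proceed as follows. If $S\in\calcr$ then $R\subseteq S$, so $(S^{II}\setminus S)\cap R = (S^{JJ}\setminus S)\cap R=\emptyset$; applying Proposition~\ref{prop-mixgen-extents} to $\context$ and to $\contextl$ (legitimate since $S\in\cals\setminus\caln$ is a mixed generator in both contexts) then yields $S=S^{II}$ and $S=S^{JJ}$. If $S\in\cala$, the defining equality $S^J=S^I$ gives $S^{JJ}=(S^J)^J=(S^I)^J\supseteq(S^I)^I=S^{II}\supseteq S$ by fact (a). Finally, if $S\in\caln\cup\calb$ then $g\notin S$ (for $\caln$ this is contained in Proposition~\ref{carac_not_a_mixgen_inL}, for $\calb$ it was already observed), whence $S\subseteq S^{II}\setminus\{g\}$; and for $h\in S^{II}$ with $h\neq g$ the inclusion $S^I\subseteq h^I$ gives $S^J=S^I\cup\{m\}\subseteq h^I\cup\{m\}=h^J$, i.e.\ $h\in S^{JJ}$, proving $S^{II}\setminus\{g\}\subseteq S^{JJ}$.

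For the two closing inclusions I would argue without reference to the class. For $S^{II}\setminus S\subseteq\overline{\chi}(S)$: any $h\in S^{II}\setminus S$ lies in $R$ by fact (b), and since $h\in S^{II}$ the set $S$ cannot strongly avoid $h$ (strong avoidance would place an attribute of $S^I$ outside $h^I$, contradicting $S^I\subseteq h^I$), so $h\notin\chi(S)$ and $h\in R\setminus\chi(S)=\overline{\chi}(S)$. For $S^{JJ}\setminus S^{II}\subseteq\overline{\chi}(S)$: first $g\notin S^{JJ}\setminus S^{II}$, because if $g\in S$ then $g\in S^{II}$, and if $g\notin S$ then $m\in S^J$ forces $g\notin S^{JJ}$. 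For $h\in S^{JJ}\setminus S^{II}$ with $h\neq g$, the inclusion $S^I\subseteq S^J\subseteq h^J=h^I\cup\{m\}$ gives $S^I\setminus\{m\}\subseteq h^I$, while $h\notin S^{II}$ gives $S^I\not\subseteq h^I$; the two together force $m\in S^I$ and $m\notin h^I$. Thus $h\in R$, and $S^I\setminus\{m\}\subseteq h^I$ says precisely that $S$ does not strongly avoid $h$, so $h\in R\setminus\chi(S)=\overline{\chi}(S)$.

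The part needing the most care is the $\calcr$ case, where the conclusion $S=S^{JJ}$ rests on applying Proposition~\ref{prop-mixgen-extents} \emph{inside} $\contextl$ with the same $R$; one must keep in mind that $R$ is never updated and that $S$ genuinely remains a mixed generator there. The other delicate point, shared by both closing inclusions, is that $\chi$ is evaluated with respect to $I$ whereas $S^{JJ}$ is taken in $\contextl$: the computations work precisely because the only attribute that can witness a discrepancy between membership in $S^{II}$ and in $S^{JJ}$ is $m$, which is exactly the attribute excluded in the definition of strong avoidance. I expect no genuinely hard step beyond this bookkeeping, provided the explicit forms of $g^J$ and $h^J$ are pinned down first.
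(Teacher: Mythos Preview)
Your proof is correct and follows essentially the same approach as the paper's: Proposition~\ref{prop-mixgen-extents} for the $\calcr$ case, the identity $S^J=S^I$ for $\cala$, the identity $S^J=S^I\cup\{m\}$ for $\caln\cup\calb$, and then elementwise analysis of $S^{II}\setminus S$ and $S^{JJ}\setminus S^{II}$ using that the only possible discrepancy attribute is $m$. The only cosmetic differences are that the paper computes $S^{JJ}=(S^I\cup\{m\})^J=S^{IJ}\setminus\{g\}$ in one line rather than checking $h\in S^{JJ}$ elementwise, and that it phrases the closing argument in terms of $h^{\circled{$I$}}\cap S^I$ rather than $S^I\subseteq h^I\cup\{m\}$; your formulation is equally valid.
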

\begin{proof}
  Whenever $S \in \calcr$, the set $S$ is a mixgen in $\context$ and $\contextl$ such that $R \subseteq S$, and Proposition~\ref{prop-mixgen-extents} assures that $S$ is an extent in both contexts. If $S \in \cala$ then, clearly, $S^J = S^I$ as well as $S^{JJ} = S^{IJ} \supseteq S^{II}$. Now, suppose that $S \in \caln \cup \calb$. Proposition~\ref{carac_not_a_mixgen_inL} implies that we have $S^J = S^I \cup \{m\}$ in case that $S \in \caln$. The same equality follows easily from the definition of $\calb$, so that that relationship is valid in either case. Then, it follows that $S^{JJ} = (S^I \cup \{m\})^J = S^{IJ} \cap m^J = S^{IJ} \setminus \{g\} \supseteq S^{II} \setminus \{g\}$.

Regarding the last two assertions, we assume $S \notin \calcr$ since otherwise both follow trivially from the already established fact that $S$ is an extent in both contexts. For the second claim: take an object $h \in S^{II} \setminus S$. Since $S$ is a mixed generator in $\context$, we have that $h \in R$ and it is clear that $\hic \cap S^I = \emptyset$ and, consequently, $S$ does not strongly avoid $h$.
Now, let $h \in S^{JJ} \setminus S^{II}$. We may suppose $h \neq g$: indeed, in $\contextl$, the object $g$ is an extremal point of every extent containing it, that is, $g \in S^{JJ}$ implies $g \in S \subseteq S^{II}$. Of course, $h \notin S^{II}$ is equivalent to the condition of $\hic \cap S^I$ being non-empty, whereas $h \in S^{JJ}$ if and only if $\hjc$ and $S^J$ are disjoint. Since $S^J \supseteq S^I$ and $\hjc = \hic$ or $\hjc = \hic \setminus \{m\}$, only the second equality may and must hold, which implies $h \in R$, as well as $\hic \cap S^I = \{m\}$. In particular, the set $S$ does not strongly avoid $h$.
\end{proof}

The following theorem gives information about derivation in $\contextl$ of mixed generators which were mapped from $\cals$ and shows a sufficient condition for $|\B(\contextl)| \geq |\B(\context)|$.

\begin{theorem}\label{teorema1} The mappings $S \xmapsto{\alpha} S$ and $S \xmapsto{\beta} S \cup \{g\}$ are injections, respectively, from $\cala \cup \calb$ and $\caladois \cup \calar \cup \calb$ into the set of all mixed generators of $\contextl$. Their images are disjoint sets whose union is a representative system. The corresponding intents are given by
$$
\begin{aligned}
  [\alpha|_{\cala}(S)]^J &= [\beta|_{\calb}(S)]^J = S^I, \\
  [\alpha|_{\calb}(S)]^J &= S^I \cup \{m\} \mbox{ and }\\
  [\beta|_{\cala}(S)]^J &= S^I \setminus \{m\}, \\
\end{aligned}
$$
where $|$ denotes domain restriction. Intents which already were intents of the original context are characterized via $\alpha(S)^J \in \intents \context \Leftrightarrow S \in \cala$ and ${\beta(S)^J \in \intents \context \Leftrightarrow S \notin \calar}$. The corresponding extents are given by
$$
\begin{aligned}
  [\alpha(S)]^{JJ} &= S \cup \left(\overline{\chi}(S) \setminus \{g\}\right) \mbox{ and }\\
  [\beta(S)]^{JJ} &= S \cup \overline{\chi}(S) \cup \{g\}.
\end{aligned}
$$
In particular, $|\B(\contextl)| \geq |\B(\context)| + |\calb| - |\calcnr|$ whenever $\cals$ is finite.
\end{theorem}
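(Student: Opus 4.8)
The plan is to prove the four assertions in turn — that $\alpha,\beta$ land among the mixed generators of $\contextl$, the intent formulas, the extent formulas, and finally injectivity together with disjointness and the representative-system property — and then to read off the counting inequality. Every computation rests on the ``split'' structure of $\contextl=\opgmk$: since $g$ is the only object lacking $m$ and $m$ the only attribute $g$ lacks, one has $g^J=M\setminus\{m\}$, $h^J=h^I\cup\{m\}$ for $h\neq g$, and $m^J=G\setminus\{g\}$. The single identity these yield, namely $S^J=S^I\cup\{m\}$ and $(S\cup\{g\})^J=S^I\setminus\{m\}$ whenever $g\notin S$, drives everything. Well-definedness of $\alpha$ is immediate, as $\cala\cup\calb\subseteq\cals\setminus\caln$ consists of mixed generators of $\contextl$. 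For $\beta$ I would first note that every $S$ in its domain avoids $g$ (for $\calar$ because $\chi(S)=R\ni g$ forces $S$ to strongly avoid $g$; for $\caladois=res(\caln)$ because $g\in R$); then, each such $S$ being a mixed generator of $\contextl$, Proposition~\ref{mixgen_extremal} applied inside $\contextl$ yields that $S\cup\{g\}$ is one too — its hypotheses $\gjc=\{m\}\neq\emptyset$ and $\gjc\cap\hjc=\emptyset$ for $h\in S$ are read off the split structure, and $(S\cup\{g\})^{JJ}=S^{JJ}\cup\{g\}$ follows from $m^J=G\setminus\{g\}$ and the identity above.

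The intent formulas follow by feeding the class conditions into that identity: $S\in\cala$ has $m\in S^I$ and $S^J=S^I$; $S\in\calb$ has $m\notin S^I$, so $\alpha(S)^J=S^I\cup\{m\}$ and $\beta(S)^J=S^I$; and $S\in\caladois\cup\calar$ gives $\beta(S)^J=S^I\setminus\{m\}$. To decide which of these are already intents of $\context$ I would compute closures directly in $\context$ using $\chi$: for $S\in\caladois\cup\calar$ (so $m\in S^I$) one has $(S^I\setminus\{m\})^I=S^{II}\cup\overline{\chi}(S)$, so $S^I\setminus\{m\}$ is closed precisely when $\overline{\chi}(S)=\emptyset$, i.e.\ $S\in\calar$; this gives $\beta(S)^J\in\intents \context\Leftrightarrow S\notin\calar$. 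A parallel computation shows $S^I\cup\{m\}$ is never closed for $S\in\calb$ — an object of $S\cap R$ always forces a further attribute — giving $\alpha(S)^J\in\intents \context\Leftrightarrow S\in\cala$. The extent formulas I would obtain from Proposition~\ref{prop_auxiliar_extensao}, which already confines $S^{JJ}$ between $S$ (respectively $S^{II}\setminus\{g\}$) and $S\cup\overline{\chi}(S)$; checking that every element of $\overline{\chi}(S)\setminus\{g\}$ actually lies in $S^{JJ}$ (again via the ``does not strongly avoid'' condition and the identity) pins down $\alpha(S)^{JJ}=S\cup(\overline{\chi}(S)\setminus\{g\})$ and $\beta(S)^{JJ}=S\cup\overline{\chi}(S)\cup\{g\}$.

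The step I expect to be hardest is establishing that the images of $\alpha$ and $\beta$ are disjoint and that their union is a representative system, i.e.\ that all the intents listed above are pairwise distinct. Individual injectivity is easy: $\alpha$ is the identity and $S\mapsto S\cup\{g\}$ with $g\notin S$ is injective, while within any single class $S\mapsto S^I$ is injective because $\cals$ is a complete system. The remaining cross-comparisons I would organise by the old/new dichotomy just proved: the new intents split into those containing $m$ (from $\alpha(\calb)$) and those omitting it (from $\beta(\calar)$), so they cannot collide with one another nor with any old intent. The genuinely delicate cases are collisions among old intents of different classes and the possibility $\alpha(S)=\beta(S')$; I would rule these out using the uniqueness of representatives in $\cals$ together with the intent characterization. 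For instance, $\alpha(S)=\beta(S')$ would force $S\in\calaum$ with $S'=S\setminus\{g\}$, and comparing $J$-derivations yields either $S^I=S'^I$ — whence $S$ and $S'$ share an extent and must coincide, a contradiction — or $S^I=S'^I\setminus\{m\}$, where the fact that $S^I$ is an intent while $S'^I\setminus\{m\}$ is or is not an intent (according to $S'\in\calar$ or $S'\in\caladois$) again contradicts $S$ and $S'$ lying in distinct classes.

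Granting injectivity, disjointness and representativity, the union injects into $\B(\contextl)$, so
$$|\B(\contextl)|\geq(|\cala|+|\calb|)+(|\caladois|+|\calar|+|\calb|).$$
Substituting $|\cals|=|\caln|+|\cala|+|\calb|+|\calcr|+|\calcnr|$ and cancelling, the target inequality $|\B(\contextl)|\geq|\B(\context)|+|\calb|-|\calcnr|$ reduces to the purely numerical $|\caladois|+|\calar|\geq|\caln|+|\calcr|$. This holds because $|\caladois|=|\caln|$ (Proposition~\ref{prop_res_inj}, as $\caladois=res(\caln)$) and $|\calcr|\leq|\calar|$, the latter since $res$ is injective on $\calcr$ and maps it into $\calar$ (indeed $\chi(S\setminus R)=R$ for $S\in\calcr$). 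This completes the proof.
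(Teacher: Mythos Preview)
Your approach mirrors the paper's: the same intent formulas, the same use of Proposition~\ref{mixgen_extremal} for $\beta$, the same appeal to Proposition~\ref{prop_auxiliar_extensao} for the extents, and the same old/new intent dichotomy to organise the disjointness check. Your explicit final count via $|\caladois|=|\caln|$ (Proposition~\ref{prop_res_inj}) and $|\calcr|\le|\calar|$ is exactly what the paper's ``in particular'' leaves implicit.

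Two small points deserve care. First, from $(S^I\setminus\{m\})^I=S^{II}\cup\overline{\chi}(S)$ you write ``closed precisely when $\overline{\chi}(S)=\emptyset$'', but the direction is reversed: when $\overline{\chi}(S)=\emptyset$ the second derivative is $S^I$, which contains $m$, so $S^I\setminus\{m\}$ is \emph{not} closed; it is closed exactly when $\overline{\chi}(S)\neq\emptyset$. Your stated conclusion $\beta(S)^J\in\intents\context\Leftrightarrow S\notin\calar$ is nevertheless the correct one. Second, in the cross-case $\alpha(S)=\beta(S')$ with $S\in\calaum$, $g\in S$, and $S'=S\setminus\{g\}\in\caladois$, both $S^I$ and $S'^I\setminus\{m\}$ \emph{are} intents of $\context$, so the phrase ``contradicts $S$ and $S'$ lying in distinct classes'' does not by itself close the argument. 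One finishes by taking the $T\in\caln$ with $res(T)=S'$; the relations $(S\setminus\{g\})^I=T^I\cup\{m\}$ and $S^J=S^I$ then force $T^I=S^I$, hence $T=S$ in the representative system $\cals$, contradicting $g\in S$, $g\notin T$. (The paper's own disjointness paragraph is equally terse on this pair.)
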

\begin{proof}
The set $\beta(S) = S \cup \{g\}$ is always a mixed generator because of Proposition~\ref{mixgen_extremal}. We show the formulas for the intents. Let $S \in \cala$. Then, $\alpha(S)^J = S^J = S^I$. Suppose that $S \in \caladois \cup \calar$. Then, $\beta(S)^J = (S \cup \{g\})^J = S^J \setminus \{m\} = S^I \setminus \{m\}$. Now, let $S \in \calb$. Consequently, one has $S^J \neq S^I$, $(S \cup \{g\})^J = S^I$ and $m \notin S^I$. The first two facts together with $g^J = M \setminus \{m\}$ imply $S^I \cup \{m\} = S^J = \alpha(S)^J$. Lastly, $\beta(S)^J = (S \cup \{g\})^J = S^J \setminus \{m\} = S^I$. Before proving the properties of the mappings, we prove the two equivalences mentioned.

Let $S \in \calb$. We prove that $\alpha(S)^J$ is not an intent of $\context$. Note that $S \cap R \neq \emptyset$. Suppose, by contradiction, that there exists $T \subseteq G$ with $T^I = \alpha(S)^J = S^J = S^I \cup \{m\}$. In particular, $T^{II} \subseteq S^{II}$, which implies $(S \cup \{h\})^{II} = S^{II}$ for each $h \in T$. Since $m \in T^I$, it follows that $T \cap R = \emptyset$ and this, combined with the fact that $S$ is a mixed generator, yields $T \subseteq S$. Moreover, one has of course that $(S \setminus R)^J \supseteq S^J$ 
and, on the other hand $T \cap R = \emptyset$ and $T \subseteq S$ imply $(S \setminus R)^J \subseteq T^J = T^I = S^J$. We arrive at $(S \setminus R)^{JJ} = S^{JJ}$, which contradicts the fact that $S$ is a mixed generator in $\contextl$. Now, let $S \in \calar$. We show that $\beta(S)^J$ is not an intent of $\context$. Set $B = \beta(S)^J = S^I \setminus \{m\}$. Since $\chi(S) = R$, it follows that the set $(S^I \setminus \{m\}) \cap \hic = B \cap \hic$ is non-empty for each $h \in R$. Now, for any set $T \subseteq G$, an equality $T^I = B$ requires that $m \notin T^I$, which clearly forces $T^I \cap \hic = \emptyset$ for some $h \in R$ and, therefore, in any case, it holds that $T^I \neq B$.


Note that $m \in S^I$ in case that $S \in \caladois \cup \calar$ and, similarly, $m \notin S^I$ whenever $S \in \calb$. Combining this with the shown formulas for the intents, as well as with the fact that the sets $S$ are drawn from a representative system, it follows that each of the restricted, composite mappings $S \mapsto \alpha|_{\cala}(S)^J, S \mapsto \alpha|_{\calb}(S)^J, S \mapsto \beta|_{\cala}(S)^J, S \mapsto \beta|_{\calb}(S)^J$ maps into the set of intents of $\contextl$. We will now show that those four mappings have disjoint image sets. Let $S \in \cala, T \in \calb$. Since $\alpha(S)^J$ is an intent of $\context$ whereas $\alpha(T)^J$ is not, it follows, in particular, that $\alpha(S)^J \neq \alpha(T)^J$. Moreover we have, as shown, $\alpha(S)^J = S^I$ and $\beta(T)^J = T^I$. An equality $\alpha(S)^J = \beta(T)^J$ can not hold since this would force $S^I = T^I$, which is not possible in a representative system. Now, let $S \in \caladois \cup \calar$ and $T \in \calb$. The intent formula implies that $\beta(S)^J \cup \{m\}$ is an intent of $\context$, whereas $\beta(T)^J \cup \{m\} = \alpha(T)^J$ is not. Thus, $\beta(S)^J \neq \beta(T)^J$. Lastly, it is trivial that $m \notin \beta(S)^J$ and $m \in \alpha(T)^J$, causing $\beta(S)^J \neq \alpha(T)^J$. Regarding the extents, we first prove that $\overline{\chi}(S) \setminus \{g\} \subseteq S^{JJ}$. Indeed, for an object $h \in \overline{\chi}(S) \setminus \{g\}$ one has that $S$ does not strongly avoid $h$, that is, $\hic \cap S^I \subseteq \{m\}$. Now, since $h$ is distinct from $g$, it follows that $\hjc = \hic \setminus \{m\}$. Consequently, $\emptyset = \hjc \cap S^I$ and, in any case, the calculated intent $S^J$ is a subset of $S^I \cup \{m\}$, which causes $\hjc \cap S^J = \emptyset$, that is, $h \in S^{JJ}$.  Combining this fact with the following two properties yields the formula for $\alpha(S)^{JJ}$: first, for each $S \in \cals$, one has that $g \in S^{JJ}$ implies $g \in S$, and the converse is obvious. Second, Proposition~\ref{prop_auxiliar_extensao} guarantees that, in general (except when $S \in \calcnr$), $S \setminus \{g\} \subseteq S^{II} \setminus \{g\} \subseteq S^{JJ} \setminus \{g\}$ as well as $S^{JJ} \setminus S^{II} \subseteq \overline{\chi}(S)$ and $S^{II} \setminus S \subseteq \overline{\chi}(S)$. The formula for $\beta(S)^{JJ}$ follows from the one for $\alpha(S)^{JJ}$ and from the fact that $g$ is an extremal point of every extent containing $g$.
\end{proof}

We now investigate what happens to the seven classes depicted in Figure~\ref{fig:decomp_gran_max} when one changes the object $g \in \mic$ chosen to perform the operation $op$. The attribute $m$, however, is to be considered fixed. Note that, for a fixed set $S$, the function value $\chi(S)$ depends on $m$ but does not depend on $\contextl = op^{g,m}(\context)$. 

\begin{lemma}[stability]\label{lemma_stability}
  The classes $\calar$ and $\calcr$ do not depend on the choice of $\contextl$. Moreover, let $S \in \cals$. Then,
  $$
  \begin{aligned}
    S \in \caln \cup \cala &\Leftrightarrow \chi(S) = \chi(S \setminus R) \\
    S \in \calb \cup \calc &\Leftrightarrow \chi(S) \subsetneq \chi(S \setminus R).
  \end{aligned}
  $$
  In particular, $\calb \cup \calcnr$ does not depend on the choice of $\contextl$.
\end{lemma}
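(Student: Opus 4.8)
The plan is to reduce both biconditionals to a single criterion phrased entirely in terms of the original incidence $I$, and then to obtain the stability assertions as corollaries. I will repeatedly use that $\chi(S)$ is computed from $I$ and $m$ only, hence is independent of the chosen $g$, and that $\chi(S)\subseteq\chi(S\setminus R)$ by Proposition~\ref{chi_antitono} applied to $S\setminus R\subseteq S$. Writing $S$ as the disjoint union of $S\setminus R$ and $S\cap R$, so that $S^I=(S\setminus R)^I\cap(S\cap R)^I$, the pivotal claim is
$$\chi(S)=\chi(S\setminus R)\ \Longleftrightarrow\ (S\setminus R)^I\setminus\{m\}\subseteq(S\cap R)^I.\qquad(\star)$$
Since $\caln,\cala,\calb,\calc$ partition $\cals$ and the two possibilities $\chi(S)=\chi(S\setminus R)$ and $\chi(S)\subsetneq\chi(S\setminus R)$ are complementary, it is enough to establish $(\star)$ together with the equivalence $S\in\caln\cup\cala\Leftrightarrow(S\setminus R)^I\setminus\{m\}\subseteq(S\cap R)^I$; the second biconditional of the lemma is then the complementary statement.

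I would prove $(\star)$ by contraposition. If $(S\setminus R)^I\setminus\{m\}\not\subseteq(S\cap R)^I$, pick $n\in(S\setminus R)^I$ with $n\neq m$ and $n\notin(S\cap R)^I$; the latter yields $h\in S\cap R$ with $n\in\hic\setminus\{m\}$, so $S\setminus R$ strongly avoids $h$ while $S$ cannot (strong avoidance of $h$ forces $h\notin S$), whence $h\in\chi(S\setminus R)\setminus\chi(S)$. Conversely, any $h\in\chi(S\setminus R)\setminus\chi(S)$ supplies $n\in(S\setminus R)^I\cap(\hic\setminus\{m\})$ with $n\notin S^I$; as $n\in(S\setminus R)^I$ this forces $n\notin(S\cap R)^I$, so the right-hand inclusion of $(\star)$ fails.

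Next I would match the inclusion in $(\star)$ to membership, splitting on whether $g\in S$. If $g\notin S$ then $S^J=S^I\cup\{m\}$: for $S\cap R=\emptyset$ the inclusion holds and $S\in\cala$, while for $S\cap R\neq\emptyset$ condition~$i)$ ensures that the inclusion can hold only if $|S\cap R|=1$, in which case $(S\setminus R)^J=S^I\cup\{m\}$ and Proposition~\ref{carac_not_a_mixgen_inL} places $S$ in $\caln$; if the inclusion fails then $S^J\neq S^I$ and $S\notin\caln$, i.e.\ $S\in\calb\cup\calc$. If $g\in S$ then $S\notin\caln$, so I only need $S^J=S^I\Leftrightarrow(S\setminus R)^I\setminus\{m\}\subseteq(S\cap R)^I$; here $S^J=(S\setminus\{g\})^I\setminus\{m\}$. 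The backward direction is direct, while the forward direction is the crux: from $S^J=S^I$ one reads off $(S\setminus\{g\})^I\cap(\gic\setminus\{m\})=\emptyset$, and combining this with condition~$i)$ at $g$, which gives $(S\setminus\{g\})^I\not\subseteq g^I$, forces $m\in(S\setminus\{g\})^I$ and hence $S\cap R=\{g\}$, from which the inclusion follows. Extracting the structure of $S\cap R$ from condition~$i)$ in these two subcases is the main obstacle of the argument.

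The stability statements then follow. For $\calar$: the condition $\chi(S)=R$ forces $S$ to strongly avoid every element of $R$, hence $S\cap R=\emptyset$, whence $S^J=S^I$ and (by Proposition~\ref{second_prop_preserved}) $S\in\cala$; thus $\calar=\{S\in\cals\mid\chi(S)=R\}$, which depends only on $\chi$ and so not on $g$. For $\calcr$ with $|R|\geq2$ (the case $|R|=1$ being trivial, as then only one $g$ is admissible): if $R\subseteq S$ then $S\cup\{g\}=S$ and $S\notin\caln$, and $S^J=S^I$ would force $S\cap R=\{g\}$ by the $g\in S$ analysis above, contradicting $|R|\geq2$; hence every such $S$ lies in $\calc$, giving $\calcr=\{S\in\cals\mid R\subseteq S\}$, again independent of $g$. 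Finally, the first biconditional identifies $\calb\cup\calc=\{S\in\cals\mid\chi(S)\subsetneq\chi(S\setminus R)\}$, which is $g$-independent, and since $\calb\cup\calcnr=(\calb\cup\calc)\setminus\calcr$ with both constituents $g$-independent, $\calb\cup\calcnr$ does not depend on the choice of $\contextl$.
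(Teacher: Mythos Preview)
Your proof is correct and rests on the same key observation as the paper's: that membership in $\caln\cup\cala$ is equivalent to the purely $I$-phrased inclusion $(S\setminus R)^I\subseteq S^I\cup\{m\}$ (your $(\star)$ is just a rewriting of this, using $S^I=(S\setminus R)^I\cap(S\cap R)^I$). The main difference is organizational. The paper proves the two directions of the first biconditional separately---the forward one by establishing the inclusion and then reading off $\chi(S)=\chi(S\setminus R)$, the backward one by a case split on $|S\cap R|$ and on the class $\calb$ versus $\calc$---and it handles the stability of $\calcr$ by a standalone argument showing that $S\in\calcr$ for one choice of $g$ forces $S\in\calcr$ for any other. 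You instead isolate $(\star)$ as an intermediate lemma, prove it once, and then match it to class membership via the single dichotomy $g\in S$ versus $g\notin S$; the stability of $\calcr$ then drops out as a corollary of your observation that $S^J=S^I$ with $g\in S$ forces $S\cap R=\{g\}$, giving the clean description $\calcr=\{S\in\cals\mid R\subseteq S\}$ for $|R|\geq 2$. Your route is somewhat more economical and makes the $g$-independence of the various classes more transparent, while the paper's route is more hands-on and stays closer to the definitions of $\calb$ and $\calc$; the underlying mathematics is the same.
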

\begin{proof} The claim regarding $\calar$ is clear because $\chi(S) = R$ implies $S \cap R = \emptyset$, which means that $S^J$ is always $S^I$, independently of the choice of $\contextl$. Likewise, the value of the function $\chi(S)$ does not depend on $\contextl$ either. We now assume $|\mic| \geq 2$. Let $S \in \calcr$ for some choice of $\contextl = op^{g,m}(\context)$. Let $h \in \mic$ with $h \neq g$ and denote by $J_h$ the incidence relation of $op^{h,m}(\context)$. The fact that $S$ is a mixgen causes $(S \setminus \{h\})^I \neq S^I$, which means that there exists $n \in \hic$ with $n \in (S \setminus \{h\})^I$. Note that $n \notin S^I$. Since $g \in S \setminus \{h\}$, $n \in (S \setminus \{h\})^I$ and $g \in \mic$, we have $n \neq m$ and $n \in S^{J_h}$. Hence, in particular, $S^{J_h} \neq S^I$ and $(S \cup \{g\})^{J_h} = S^{J_h} \setminus \{m\} \neq S^I$, which causes $S \in \calcr$ for the choice $\contextl = op^{h,m}(\context)$.


  For the first equivalence, let $S \in \caln \cup \cala$ and $h \in R$. We need to show first that $(S \setminus R)^I \subseteq S^I \cup \{m\}$. If $S \in \caln$, then both containments follow from Proposition~\ref{carac_not_a_mixgen_inL}. If not, then $S \in \cala$ and $S^I = S^J$. We assume $g \in S$ because, otherwise, $S^J = S^I$ forces $S \cap R = \emptyset$ and then there is nothing to prove. First we prove that $S \setminus \{g\}$ does not strongly avoid $g$. Set $T = S \setminus \{g\}$. A simple calculation shows $S^I = (T \cup \{g\})^J = T^J \cap g^J = (T^I \cup \{m\}) \setminus \{m\} = T^I \setminus \{m\}$ and, consequently, $S^I \cup \{m\} \supseteq T^I = (S \setminus \{g\})^I$, that is, $(S \setminus \{g\})^I \cap \gic \subseteq \{m\}$ and $S \setminus \{g\}$ does not strongly avoid $g$. It must hold as well that $S \cap R \subseteq \{g\}$ because, otherwise, we would have $m \notin (S \setminus \{g\})^I$ and $(S \setminus \{g\})^I = S^I$, contradicting that $S$ is a mixgen. In any case, the containments $S^I \subseteq (S \setminus R)^I \subseteq S^I \cup \{m\}$ hold and, consequently, for arbitrary $h \in R$:
  $$
  \begin{aligned}
    \mbox{$S$ strongly avoids $h$} &\Leftrightarrow S^I \cap \left(\hic \setminus \{m\}\right) \neq \emptyset\\
                                   &\Leftrightarrow \left(S^I \cup \{m\}\right) \cap \left(\hic \setminus \{m\}\right) \neq \emptyset\\
                                   &\Leftrightarrow \left(S \setminus R\right)^I \cap \left(\hic \setminus \{m\}\right) \neq \emptyset\\
                                   &\Leftrightarrow \mbox{$S \setminus R$ strongly avoids $h$}.
  \end{aligned}
  $$
For the converse, suppose that $S \notin \caln \cup \cala$, that is, $S \in \calb \cup \calc$. Set $T = S \cap R$. Clearly, $T \neq \emptyset$. Suppose that $|T| \geq 2$ and take $h \in T$. Then, since $S$ is a mixgen, it follows that $(S \setminus \{h\})^I \neq S^I$, which is equivalent to $(S \setminus \{h\})^I \cap \hic \neq \emptyset$. From $|T| \geq 2$ follows that $m \notin (S \setminus \{h\})^I$, which in turn implies that $S \setminus \{h\}$ strongly avoids $h$. Proposition~\ref{chi_antitono} implies that $S \setminus R$ strongly avoids $h$ as well and, consequently, $\chi(S \setminus R) \neq \chi(S)$. For the case $|T| = 1$, suppose additionally that $S \in \calb$. Then, $T = \{h\}$ with $h \neq g$, since no element in $\calb$ may contain $g$. We claim that $(S \setminus \{h\})^I$ contains properly $S^I \cup \{m\}$. The containment is clear, we have to discard equality: if $(S \setminus \{h\})^I = S^I \cup \{m\}$ then, $(S \setminus \{h\})^J = S^I \cup \{m\}$ and, by Proposition~\ref{carac_not_a_mixgen_inL}, the set $S$ would belong to $\caln$ (and not to $\calb$). Therefore, there exists $n \in (S \setminus \{h\})^I$ with $n \neq m, n \in \hic$. Consequently, $S \setminus \{h\}$ strongly avoids $h$ and, like before, $\chi(S \setminus R) \neq \chi(S)$. Now, suppose that $S \in \calc$. In this case, $T = \{g\}$. Take an element $n \in S^J \setminus S^I$ with $n \neq m$. Since the only object $h \in G$ with $h^J \neq h^I$ is $g$, it holds that $n \in (S \setminus \{g\})^I$ and $n \in \gic$. Hence, in particular, the set $(S \setminus \{g\})^I \cap (\gic \setminus \{m\})$ is non-empty, meaning that $S \setminus \{g\}$ strongly avoids $g$. The second equivalence follows from the first and Proposition~\ref{chi_antitono}.
\end{proof}

Instead of mapping $\caln$ and $\calcr$ ``internally'' into $\cals$, it is possible to map both directly into the set of intents of $\contextl$ via $res \circ \beta \circ J$, where $J$ denotes derivation in $\contextl$ and $\beta$ adds the object $g$, that is, $\beta$ is as defined in Theorem~\ref{teorema1}. The characterization of sets which are mixed generators in $\context$ but not in $\contextl$ implies that $(res(S) \cup \{g\})^J = (S^I \cup \{m\}) \cap g^J = S^I$ whenever $S \in \caln$. Similarly, it is clear that $(res(S) \cup \{g\})^J = (S \setminus R)^I \setminus \{m\}$ for every $S \in \calcr$. In this setting, it is not required anymore that $\cals$ be complete. Moreover, we partition $\cala$ in two, instead of three: we set $\calanr = \calaum \cup \caladois$. Figure~\ref{fig:decomp_tudo} illustrates those facts as well as properties stated in Theorem~\ref{teorema1} and Lemma~\ref{lemma_stability}.

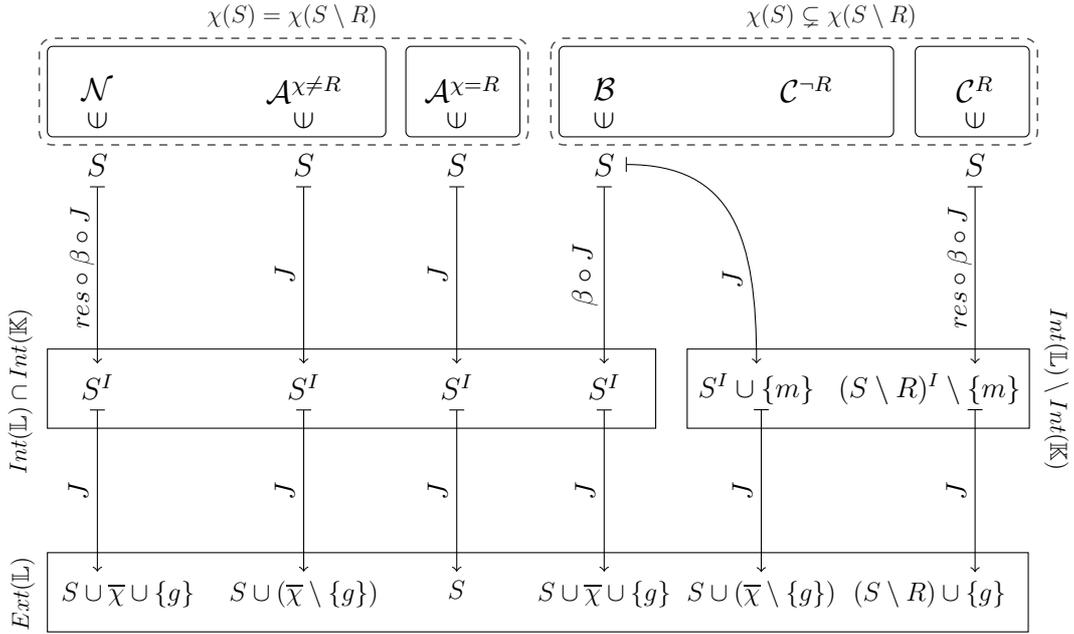
\begin{figure}[htbp]
  \centering
  \begin{tikzpicture}
    [x=1mm,y=1mm,
      classe/.style={rectangle, draw=white!50, inner sep=0pt, minimum width=7mm, minimum height=5mm},
      cup/.style={text=white},
      classegrande/.style={rectangle,draw=red!50, inner sep=0pt, minimum width=18mm, minimum height=5mm},
      classeph/.style={rectangle, draw=white!50, minimum width=1mm},
      classephgrande/.style={rectangle, draw=white!50},
      classein/.style={rectangle,draw=white!100, inner sep=0pt, minimum width=1mm, minimum height=1mm}
    ]
    \scalebox{1}{ 
    \node[classe] at (0,0) (zero) {};
    \node[classe] (caln) [right=1mm of zero] {$\caln$};
    \node[classe] (calanr) [right=18.5mm of caln] {$\calanr$}; 
    \node[classe] (calar) [right=10.75mm of calanr] {$\calar$}; 
    \node[classe] (calb) [right=10mm of calar] {$\calb$}; 
    \node[classe] (calcnr) [right=19.5mm of calb] {$\calcnr$}; 
    \node[classe] (calcr) [right=15mm of calcnr] {$\calcr$}; 

    \node[classein] (in0) [below=1.5mm of caln, rotate=90, anchor=center] {$\in$};
    \node[classeph] (eldom0) [below=3mm of in0.center] {$S$};

    \node[classein] (in1) [below=1.5mm of calanr, rotate=90, anchor=center] {$\in$};
    \node[classeph] (eldom1) [below=3mm of in1.center] {$S$};

    \node[classein] (in2) [below=1.5mm of calar, xshift=-0.75mm , rotate=90, anchor=center] {$\in$};
    \node[classeph] (eldom2) [below=4.5mm of calar, xshift=-0.75mm] {$S$};

    \node[classein] (in3) [below=1.5mm of calb, rotate=90, anchor=center] {$\in$};    
    \node[classeph] (eldom3) [below=3mm of in3.center] {$S$};

    \node[classein] (in4) [below=1.5mm of calcr, rotate=90, anchor=center] {$\in$};
    \node[classeph] (eldom4) [below=3mm of in4.center] {$S$};

    \node(letreiro1) at (-2,-40) [rotate=90,scale=0.75] {$\intents \contextl \cap \intents \context$};

    \node[classeph] (ph0) [below=23.5mm of eldom0, scale=0.96] {$S^I$};    
    \node[classeph] (ph1) [below=23.5mm of eldom1, scale=0.96] {$S^I$};
    \node[classeph] (ph2) [below=23.5mm of eldom2, scale=0.96] {$S^I$};
    \node[classeph] (ph3) [below=23.5mm of eldom3, scale=0.96] {$S^I$};
    \node[classephgrande] (ph4) [below right=23.5mm and 8.25mm of eldom3, scale=0.9] {$S^I \cup \{m\}$};
    \node[classephgrande] (ph5) [below=23.5mm of eldom4, xshift=-6mm, scale=0.9] {$(S\setminus R)^I \setminus \{m\}$};
     
    \draw[|->] (eldom0.south) -- (ph0.north) node [pos=0.46, xshift=-2mm, rotate=90, scale=0.85] {$res \circ \beta \circ J$};
    \draw[|->] (eldom1.south) -- (ph1.north) node [midway, xshift=-2.5mm, rotate=90, scale=0.9] {$J$};
    \draw[|->] (eldom2.south) -- (ph2.north) node [midway, xshift=-2.5mm, rotate=90, scale=0.9] {$J$};
    \draw[|->] (eldom3.south) -- (ph3.north) node [midway, xshift=-2.5mm, rotate=90, scale=0.9] {$\beta \circ J$};
    \draw[|->] (eldom3.east) .. controls ($(eldom3.center) + (17,0)$) and ($(ph4.center) + (0,20)$) .. (ph4.north) node [pos=0.75, xshift=-2.5mm, rotate=90, scale=0.9] {$J$};
    \draw[|->] (eldom4.south) -- ($(ph5.north) + (6,0)$) node [pos=0.46, xshift=-2mm, rotate=90, scale=0.85] {$res \circ \beta \circ J$};

    \node(ext0) [below=21.5mm of ph0.south, xshift=4mm, scale=0.83] {$S \cup \overline{\chi} \cup \{g\}$};
    \node(ext1) [below=21.5mm of ph1.south, scale=0.83] {$S \cup \left(\overline{\chi} \setminus \{g\}\right)$};
    \node(ext2) [below=21.5mm of ph2.south, scale=0.83] {$S$};
    \node(ext3) [below=21.5mm of ph3.south, scale=0.83] {$S \cup \overline{\chi} \cup \{g\}$};
    \node(ext4) [below=21mm of ph4.south, xshift=0.625mm, scale=0.83] {$S \cup \left(\overline{\chi} \setminus \{g\}\right)$};
    \node(ext5) [below=21mm of ph5.south, xshift=0mm, scale=0.83] {$(S \setminus R) \cup \{g\}$};

    \draw[|->] (ph0.south) -- ( $(ext0.north) + (-4,0)$) node [midway, xshift=-2.5mm, rotate=90, scale=0.9] {$J$};
    \draw[|->] (ph1.south) -- (ext1.north) node [midway, xshift=-2.5mm, rotate=90, scale=0.9] {$J$};
    \draw[|->] (ph2.south) -- (ext2.north) node [midway, xshift=-2.5mm, rotate=90, scale=0.9] {$J$};
    \draw[|->] (ph3.south) -- (ext3.north) node [midway, xshift=-2.5mm, rotate=90, scale=0.9] {$J$};
    \draw[|->] ($(ph4.south) + (0.625,0.6)$) -- (ext4.north) node [midway, xshift=-2.5mm, rotate=90, scale=0.9] {$J$};
    \draw[|->] ($(ph5.south) + (6,0.6)$) -- ($(ext5.north) + (6,0)$) node [midway, xshift=-2.5mm, rotate=90, scale=0.9] {$J$};
    
    \node (retangulointer) [rectangle, draw, anchor=south west, minimum height=10.5mm, minimum width=80mm] at ($(letreiro1.center) + (3.5,-5)$) {};
    \node (retangulosohl) [rectangle, draw, anchor=south west, minimum height=10.5mm, minimum width=45mm] at ($(retangulointer.south east) + (4,0)$) {};
    \node(letreiro2) at ($(retangulosohl.360) + (3.5,0)$) [rotate=-90, scale=0.75] {$\intents \contextl \setminus \intents \context$};

    \node (retanguloext) [rectangle, draw, anchor=south west, minimum height=10.5mm, minimum width=129mm] at ($(retangulointer.south west) + (0,-27.0)$) {};
    \node(letreiro3) [below left=8.7mm and 5mm of letreiro1, rotate=90, scale=0.75] {$\extents \contextl$};


    \node (classena) [rectangle, draw, rounded corners=2pt, anchor=south west, minimum height=12mm, minimum width=44.5mm] at ($(retangulointer.north west) + (0,+28)$) {};

    \node (classear) [rectangle, draw,rounded corners=2pt, anchor=south west, minimum height=12mm, minimum width=15mm] at ($(classena.south east) + (2.5,0)$) {};

    \node (classebcnr) [rectangle, draw, rounded corners=2pt, anchor=south west, minimum height=12mm, minimum width=44mm] at ($(classear.south east) + (5,0)$) {};

    \node (classecr) [rectangle, draw, rounded corners=2pt, anchor=south east, minimum height=12mm, minimum width=15mm] at ($(retangulosohl.north east) + (0,+28)$) {};

    \draw [semithick, dashed, black!65,rounded corners=5pt] ($(classena.north west) + (-1,1)$) rectangle ($(classear.south east) + (1,-1)$);
    \node [color=black!90, above left=4.5mm and 5mm of calar, scale=0.75] (chiigual) {$\chi(S) = \chi(S \setminus R)$};

    \draw [semithick, dashed, black!65, rounded corners=5pt] ($(classebcnr.north west) + (-1,1)$) rectangle ($(classecr.south east) + (1,-1)$);
    \node [color=black!90, above left=4.5mm and 3mm of calcr, scale=0.75] (chiigual) {$\chi(S) \subsetneq \chi(S \setminus R)$};
    }
  \end{tikzpicture}
  \caption{Decomposition of a system of mixed generators and corresponding derivations.}
  \label{fig:decomp_tudo}
\end{figure}

We say that a family of $R$-mixed generators $\cals \subseteq \mathcal{P}(G)$ has the \emph{semi-downset property} if the implication $S \in \cals, T \subseteq R \Rightarrow S \setminus T \in \cals$ holds.

\begin{proposition}\label{prop-semidownset}
  For any context $\context$ with finite object set $G$ and an arbitrary $R \subseteq G$ there exists a complete system of $R$-mixed generators which has the semi-downset property.
\end{proposition}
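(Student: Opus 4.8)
The plan is to reduce the statement, by a decomposition according to \emph{cores}, to a purely order-theoretic lemma about closure operators, and then to build the required system by choosing colexicographically minimal free generators.

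First I would record the structural facts that drive the decomposition. The proof of Proposition~\ref{prop-mixgen-extents} shows that for any $R$-mixed generator $S$ one has $S^{II}\setminus S\subseteq R$; hence $S^{II}\cap(G\setminus R)=S\setminus R$, so every mixed generator of an extent $E$ satisfies $E\setminus R\subseteq S\subseteq E$ and has the same \emph{core} $C:=S\setminus R=E\setminus R$. Combined with Proposition~\ref{mixgen_obtained_by_restriction} (removing a subset of $S\cap R$ again yields a mixed generator) this shows that the operation $S\mapsto S\setminus T$, $T\subseteq R$, preserves the core. Consequently the set of extents partitions according to cores, and it suffices to produce, for each core $C$, a family of mixed generators with core $C$ that represents every extent with core $C$ exactly once and is closed under removing elements of $R$.

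Fix such a core $C$. On $\mathcal{P}(R)$ I would consider the operator $\varphi(A)=(C\cup A)^{II}\cap R$; a routine check (extensivity, monotonicity and idempotency all follow from the properties of $(\cdot)^{II}$) shows it is a closure operator. Writing $\mathcal{F}=\{A\subseteq R : (C\cup A)^{II}\cap(G\setminus R)=C\}$ for the \emph{core-exact} sets, one verifies that $\mathcal{F}$ is a down-set, that $\varphi$ maps $\mathcal{F}$ into itself, and---using conditions $i)$ and $ii)$ of the definition together with $S^{II}\setminus S\subseteq R$---that the closed sets of $\varphi|_{\mathcal{F}}$ are exactly the sets $E\cap R$ for extents $E$ with core $C$, while the free sets of $\varphi|_{\mathcal{F}}$ (those $A$ with $\varphi(A\setminus\{x\})\neq\varphi(A)$ for every $x\in A$) are exactly the sets $S\cap R$ for mixed generators $S$ with core $C$, the correspondence being $A\mapsto C\cup A$. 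With this dictionary the semi-downset property becomes the requirement that the chosen $R$-parts form a down-set, so the whole problem reduces to the following lemma.

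Abstract lemma: for a closure operator $\varphi$ on a finite down-set $\mathcal{F}\subseteq\mathcal{P}(R)$ there is a down-set $\mathcal{A}\subseteq\mathcal{F}$ such that $\varphi|_{\mathcal{A}}$ is a bijection onto the closed sets. I expect this to be the main obstacle. Two ingredients are standard: the free sets form a down-set (if $x\in\varphi(B\setminus\{x\})$ for some $x\in B\subseteq A$ then $x\in\varphi(A\setminus\{x\})$ by monotonicity, contradicting freeness of $A$), and one has the exchange property $\varphi(B)=\varphi(B')\Rightarrow\varphi(X\cup B)=\varphi(X\cup B')$. Now let $A_Q$ be the colexicographically minimal free generator of each closed set $Q$ (it exists, as every closed set is generated by a free subset of itself). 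To see that $\mathcal{A}=\{A_Q\}$ is a down-set, take $A=A_Q$ and $x\in A$, put $B=A\setminus\{x\}$ and $Q'=\varphi(B)$; I must show $B=A_{Q'}$. If not, then $A_{Q'}<_{\mathrm{colex}}B$, and by the exchange property $\varphi(A_{Q'}\cup\{x\})=\varphi(B\cup\{x\})=Q$; extracting a free generator $A''\subseteq A_{Q'}\cup\{x\}$ of $Q$ gives a free generator of $Q$. The crucial point is that freeness of $A$ forces $x\notin\varphi(A\setminus\{x\})=Q'\supseteq A_{Q'}$, so $x$ lies outside both $A_{Q'}$ and $B$; for colex this is exactly the case in which adding a common fresh element preserves the comparison, whence $A''\le_{\mathrm{colex}}A_{Q'}\cup\{x\}<_{\mathrm{colex}}B\cup\{x\}=A$, contradicting the minimality of $A_Q$.

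Finally I would assemble the pieces: applying the lemma to each $\varphi|_{\mathcal{F}}$ yields a down-set $\mathcal{A}_C$ of free generators representing each extent with core $C$ once, and $\cals=\bigcup_C\{C\cup A : A\in\mathcal{A}_C\}$ is then a complete system of $R$-mixed generators. Its semi-downset property is immediate: for $S=C\cup A\in\cals$ and $T\subseteq R$ one has $S\setminus T=C\cup(A\setminus T)$ with $A\setminus T\in\mathcal{A}_C$, since $\mathcal{A}_C$ is a down-set. The only genuinely delicate step is the abstract lemma, and specifically the heredity of the colexicographic choice---which is why the observation that freeness makes the exchanged element fresh, turning colex into a translation-invariant comparison, is the heart of the argument.
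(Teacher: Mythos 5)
Your proposal is correct, and although its engine is the same greedy idea as the paper's — select, for each extent, a generator that is minimal with respect to a linear order on sets, and defend this choice by an exchange argument — the route is genuinely different. The paper does everything in one global step: it fixes a total order on $G$, takes the lexicographically minimal mixed generator $S(A)$ of each extent $A$, and verifies heredity directly: if $S\setminus T\notin\cals$, a lex-smaller mixgen $V$ with the same closure yields $V\cup T<S$ (this uses $T\cap(S\setminus T)=\emptyset$, which is exactly the ``fresh element'' phenomenon you obtain from freeness of $x$), and extracting a mixed generator inside $V\cup T$ contradicts minimality of $S$. You instead first quotient out the inert part of the problem: since $S^{II}\setminus S\subseteq R$ for every mixgen, fixing the core $C=E\setminus R$ reduces the statement to the closure operator $\varphi(A)=(C\cup A)^{II}\cap R$ on the down-set of core-exact subsets of $R$, where condition $ii)$ of the mixgen definition holds automatically and mixgens correspond precisely to free sets; what remains is your abstract lemma that colexicographically minimal free generators of a closure operator on a finite down-set form a down-set, whose proof (freeness makes the exchanged element lie outside both compared sets, so colex comparison is stable under adding it) I checked and find sound, including the containments $A_{Q'}\cup\{x\}\subseteq Q\in\mathcal{F}$ needed to stay inside $\mathcal{F}$. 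Your route buys two things: the lemma is a reusable statement about arbitrary closure operators, and the reduction makes structural the one delicate point that the paper passes over quickly — namely that a mixed generator of $(V\cup T)^{II}$ can be found \emph{inside} $V\cup T$, which in the paper needs the silent observation $(V\cup T)^{II}\setminus R=S^{II}\setminus R\subseteq S\setminus R=(S\setminus T)\setminus R\subseteq V$; in your setting this is absorbed by core-exactness. The paper's route buys brevity: one global lex order replaces the per-core colex choices, and no dictionary between free sets and mixed generators has to be verified.
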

\begin{proof} We prove this constructively. Define some arbitrary total order on $G$. Consider the induced lexicographic order over $\mathcal{P}(G)$ and for every extent $A \in \extents \context$, define $S(A) = \min\{S \subseteq G \mid S \mbox{ is a mixed generator of } A\}$ and $\cals = \{S(A) \mid A \in \extents \context\}$. We show that, for every $S \in \cals$ and every $T \subseteq S \cap R$, the set $S \setminus T$ belongs as well to $\cals$. Set $U = S \setminus T$. By Proposition~\ref{mixgen_obtained_by_restriction}, we have that $U$ is a mixed generator. Suppose, by contradiction, that $U \notin \cals$. Then, there exists a set $V \subseteq G$ with $V < U$, $V \in \cals$ and $U^{II} = V^{II}$ as well as $U^I = V^I$. Note that $T \cap U = \emptyset$ and $V < U$ imply $V \cup T < U \cup T = S$. Then, $(V \cup T)^I = V^I \cap T^I = U^I \cap T^I = (U \cup T)^I = S^I$ which yields $(V \cup T)^{II} = S^{II}$. Since $G$ is finite, we may take a mixed generator of $(V \cup T)^{II}$ which is a subset of $V \cup T$ (possibly $V \cup T$ itself). This contradicts $S \in \cals$.
\end{proof}

It turns out that for every $m \in M$ there exists $g \in \mic$ such that $|\calb| \geq |\calcnr|$, unless the attribute $m$ corresponds to a full column in the formal context.

\begin{theorem}\label{teorema2}
  Let $\context = \GMI$ be a context with finite $G$ and let $m \in M$. Suppose that $\cals$ is a representative system of $\mic$-mixed generators which has the semi-downset property. If $m$ does not correspond to a full column, then there exists an object $g \in \mic$ such that ${|\calb| \geq |\calcnr|}$. 
\end{theorem}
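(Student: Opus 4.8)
The plan is to fix the attribute $m$, write $R=\mic$, and lean on the stability lemma (Lemma~\ref{lemma_stability}): the set $D:=\calb\cup\calcnr$ is independent of the chosen $g\in R$, and for each such $g$ one has the clean description $\calcnr=\{S\in D:g\in S\}$ and $\calb=\{S\in D:g\notin S\}$. Indeed, an element of $D$ falls into $\calc$ (hence $\calcnr$, since $D$ avoids $\calcr$) exactly when it contains $g$, while $\calb$-members never contain $g$, and $D=\calb\sqcup\calcnr$ for every $g$. Consequently $|\calb|-|\calcnr|=|D|-2\,|\{S\in D:g\in S\}|$, and summing over all $g\in R$ (which is non-empty precisely because $m$ is not a full column) gives
\[
 \sum_{g\in R}\bigl(|\calb|-|\calcnr|\bigr)=|R|\,|D|-2\sum_{S\in D}|S\cap R|.
\]
So it suffices to prove the purely combinatorial inequality $\sum_{S\in D}|S\cap R|\le\tfrac12\,|R|\,|D|$; averaging then forces some $g$ with $|\calb|\ge|\calcnr|$.

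The engine for that inequality is a structural fact I would isolate first: \emph{for every $S\in D$ one has $S\cap R\subseteq\chi(S\setminus R)$}. This rests on condition $i)$ of mixed generators together with the observation that adjoining an object $h\in R$ to a set $S$ forces $h\notin\chi(S)$, since then $S^I\subseteq h^I$ is disjoint from $\hic$. When $|S\cap R|\ge2$, removing a $w\in S\cap R$ leaves another $R$-object, so $m\notin(S\setminus\{w\})^I$ and the condition-$i)$ witness $n\in(S\setminus\{w\})^I\cap\wic$ is automatically $\neq m$, placing $w$ in $\chi(S\setminus R)$; the case $|S\cap R|=1$ requires only checking that if the sole witness were $m$ then $S$ would lie in $\caln\cup\cala$ rather than in $D$.

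Next I would stratify $D$ by the base $U:=S\setminus R$, which is an extent of $\cala$ and satisfies $U\in\cals$ by the semi-downset property (take $T=S\cap R$). Writing $W=S\cap R$, the fact above combined with the stability-lemma equivalence $S\in\calb\cup\calc\Leftrightarrow\chi(S)\subsetneq\chi(S\setminus R)$ shows that the fibre $D_U=\{S\in D:S\setminus R=U\}$ corresponds exactly to the sets $W$ with $\emptyset\neq W\subseteq\chi(U)$, $W\neq R$ and $U\cup W\in\cals$; the forward direction uses the structural fact and the backward direction uses that any $w\in W\subseteq\chi(U)$ already makes $\chi$ drop. By the semi-downset property (and Proposition~\ref{chi_antitono}) the family $\mathcal{G}_U:=\{W\subseteq\chi(U):U\cup W\in\cals\}$ is a down-set in $2^{\chi(U)}$, and $D_U=\mathcal{G}_U\setminus\{\emptyset,R\}$.

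Finally I would bound the average trace size on each fibre via a symmetric-chain decomposition of $2^{\chi(U)}$: a down-set meets each symmetric chain in a bottom segment, whose average cardinality is at most $\tfrac12|\chi(U)|\le\tfrac12|R|$. The delicate point—and the step I expect to be the main obstacle—is that passing from $\mathcal{G}_U$ to $D_U$ deletes the empty set, which contributes $0$ to the sum but shifts the count, so the naive average over $D_U$ can nudge above $\tfrac12|\chi(U)|$. This is repaired exactly on the chain through $\emptyset$: when $\chi(U)=R$ the top set $R$ is deleted as well, restoring the symmetry, and when $\chi(U)\subsetneq R$ the strict inequality $|\chi(U)|<|R|$ absorbs the deficit. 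Carrying out this chain-by-chain accounting yields $\sum_{W\in D_U}|W|\le\tfrac12|R|\,|D_U|$ for every base $U$; summing over bases gives the global inequality and hence the theorem.
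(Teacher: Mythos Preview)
Your argument is correct, and its opening moves coincide with the paper's: both use Lemma~\ref{lemma_stability} to see that $D=\calb\cup\calcnr$ is independent of $g$, that $\calb$ and $\calcnr$ are simply the $g$-free and $g$-containing parts of $D$, and both reduce (after the double count over $g\in R$) to the inequality $\sum_{S\in D}|S\cap R|\le\tfrac12|R|\,|D|$.

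Where the two proofs diverge is in establishing that inequality. The paper groups $D$ by the value $k=|S\cap R|$, invokes Hall's theorem to produce, for each $k\le n/2$, a bijection $\phi^k$ from the $(n{-}k)$-subsets of $R$ onto the $k$-subsets with $\phi^k(T)\subseteq T$, and then uses only the semi-downset injections $\cald^T\hookrightarrow\cald^{\phi^k(T)}$ to compare the two sides. Your route instead stratifies $D$ by the base $U=S\setminus R$, uses the full content of Lemma~\ref{lemma_stability} (the equivalence $S\in\calb\cup\calc\Leftrightarrow\chi(S)\subsetneq\chi(S\setminus R)$ together with your structural fact $S\cap R\subseteq\chi(S\setminus R)$) to identify each fibre $D_U$ with a down-set in $2^{\chi(U)}$ minus $\{\emptyset,R\}$, and then bounds the average trace size on each fibre via a symmetric chain decomposition, handling the removal of $\emptyset$ chain-by-chain exactly as you describe. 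Your approach exploits more of the paper's earlier structural work (the $\chi$-description of the classes) and thereby avoids the Hall-matching step; the paper's proof of Theorem~\ref{teorema2} is in turn more self-contained, needing from Lemma~\ref{lemma_stability} only the stability of $\calb\cup\calcnr$ and nothing about $\chi$.
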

\begin{proof}
Set $R = \mic$ and $n = |R|$. We may of course suppose $n \geq 2$: otherwise $\calcnr = \emptyset$ and there is nothing to prove. Define $\cald^T = \{S \in \calb \cup \calcnr \mid S \cap R = T\}$ for each $T \subseteq R$. Notice that $\cald^{\emptyset} = \cald^R = \emptyset$. For $g \in R$ we further define
$$ \cald^g = \bigcup_{\{g\} \subseteq T \subsetneq R} \cald^T \quad \mbox{ and }\quad \cald^{\neg g} = \bigcup_{\emptyset \subsetneq T \subseteq R \setminus \{g\} } \cald^T.$$
Since $\calb \cup \calcnr = \cald^g \cup \cald^{\neg g}$ and no set in $\calb$ contains $g$ whereas every set in $\calcnr$ does, it actually holds that $\calb = \cald^{\neg g}$ and $\calcnr = \cald^g$. Suppose, by contradiction, that $|\cald^{\neg g}| < |\cald^g|$ for every $g \in R$.
  $$
  \begin{aligned}
    |\cald^{\neg g}| < |\cald^{g}| \Leftrightarrow \sum_{\emptyset \subsetneq T \subseteq R \setminus \{g\}} |\cald^T| < \sum_{\{g\} \subseteq T \subsetneq R} |\cald^T|.
  \end{aligned}
  $$
  Summing for all $g$:
  $$
  \begin{aligned}
    \sum_{g \in R}\, \sum_{\emptyset \subsetneq T \subseteq R \setminus \{g\}} |\cald^T| < \sum_{g \in R}\, \sum_{\{g\} \subseteq T \subsetneq R} |\cald^T|.
  \end{aligned}
  $$
  Consider a proper subset $T$ of $R$ and set $k = |T|$. For each element of $R \setminus T$, there exists a summand $|\cald^T|$ on the left hand side of the inequality above ($n-k$ summands). On the other hand, for each element of $T$, there exists a summand $|\cald^T|$ on the right-hand side ($k$ summands). We calculate a balance: if $k \leq \lfloor n/ 2 \rfloor$, the summand $|\cald^T|$ contributes exactly $(n - 2k)$ times to the left-hand side. If $k \geq \lfloor n/2 \rfloor + 1$, the summand $|\cald^T|$ will appear exactly $(2k-n)$ times on the right-hand side. We have, therefore
\begin{equation}\label{eq1}
  \sum_{k=1}^{\lfloor n/2 \rfloor} (n - 2k) \cdot \sum_{\substack{T \subseteq R\\|T| = k}} |\cald^T| < \sum_{k = \lfloor n/2 \rfloor + 1}^{n-1} (2k-n) \cdot \sum_{\substack{T \subseteq R\\ |T| = k}} |\cald^T| \\
  = \sum_{k = \lceil n/2 \rceil}^{n-1} (2k-n) \cdot \sum_{\substack{T \subseteq R\\ |T| = k}} |\cald^T|\\
\end{equation}
\emph{(the equality above is indeed valid: for odd $n$, it is very clear. For even $n$, notice that $2k-n$ equals zero for $k=n/2$).}

On the other hand, consider the $k$-element subsets of $R$ for each $1 \leq k \leq n-1$, $\calr^k = \{T \subseteq R \mid |T| = k\}$. The semi-downset property of $\cals$ assures the existence of an injection from $\cald^T$ into $\cald^U$ whenever $T \supseteq U$, given by the restriction $S \mapsto S \setminus (T \setminus U)$. Now, for each $i$ with $1 \leq i \leq \lfloor n/2 \rfloor$, take a bijection $ \phi^i: \calr^{n-i} \rightarrow \calr^i$ such that $\phi^i(T) \subseteq T$ for every $T \in \calr^{n-i}$ (such bijections exist by an elementary application of Hall's theorem). Now, let $k \in \{1,2,\ldots, \lfloor n/2 \rfloor\}$, so that $n-2k \geq 0$. Then:
$$
\begin{aligned}
  \sum_{T \in \calr^{n-k}} |\cald^T| \leq \sum_{T \in \calr^{n-k}} |\cald^{\phi^k(T)}| 
  &\Leftrightarrow (n-2k) \cdot \sum_{T \in \calr^{n-k}} |\cald^T| \leq (n-2k) \cdot \sum_{T \in \calr^{n-k}} |\cald^{\phi^k(T)}|\\
  &\Rightarrow \sum_{k=1}^{\lfloor n/2 \rfloor} (n - 2k) \sum_{T \in \calr^{n-k}} |\cald^T| \leq \sum_{k=1}^{\lfloor n/2 \rfloor} (n - 2k) \sum_{T \in \calr^{n-k}} |\cald^{\phi^k(T)}| \\
  &\Leftrightarrow \sum_{k=n - \lfloor n/2 \rfloor}^{n-1} (2k - n) \sum_{T \in \calr^{k}} |\cald^T| \leq \sum_{k=1}^{\lfloor n/2 \rfloor} (n - 2k) \sum_{T \in \calr^{n-k}} |\cald^{\phi^k(T)}| \\
  &\Leftrightarrow \sum_{k=\lceil n/2 \rceil}^{n-1} (2k-n) \sum_{\substack{T \subseteq R\\|T| = k}} |\cald^T| \leq \sum_{k=1}^{\lfloor n/2 \rfloor} (n-2k)\sum_{\substack{T \subseteq R\\|T| = k}} |\cald^T|,
\end{aligned}
$$
which contradicts~\eqref{eq1}.
\end{proof}

As a consequence, the initially posed question gets answered in the affirmative:

\begin{corollary}\label{coro-answer}
  Let $\context = \GMI$ be a context with finite $G$ and let $m \in M$ be an attribute which does not correspond to a full column. Then, there exists an object $g \in \mic$ such that $|\B(\contextl)| \geq |\B(\context)|$, where $\contextl = \opgmk$.
\end{corollary}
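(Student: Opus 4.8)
The plan is to chain together the three main results established above, each of which supplies exactly one of the hypotheses demanded by the next, so that the corollary becomes a short assembly rather than a fresh argument.

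First I would fix the system of mixed generators once and for all. Since $G$ is finite, Proposition~\ref{prop-semidownset} provides a complete system $\cals$ of $\mic$-mixed generators enjoying the semi-downset property. Because $\cals$ is a representative system the closure map $S \mapsto S^{II}$ injects it into $\extents \context$, and completeness upgrades this to a bijection; as $G$ is finite there are only finitely many extents, so $\cals$ is finite. This single $\cals$ will feed both Theorem~\ref{teorema1} (which requires completeness and finiteness) and Theorem~\ref{teorema2} (which requires the semi-downset property), and it is essential that the two theorems be applied to the \emph{same} system and, ultimately, to the \emph{same} object $g$.

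Next I would invoke the non-degeneracy hypothesis. That $m$ is not a full column means precisely $m^I \neq G$, i.e. $\mic = G \setminus m^I \neq \emptyset$, which is exactly the standing assumption under which the four-class decomposition $\cals = \caln \cup \cala \cup \calb \cup \calc$ and Theorem~\ref{teorema2} were set up. Applying Theorem~\ref{teorema2} to $\cals$ therefore yields an object $g \in \mic$ for which the classes $\calb$ and $\calcnr$ arising from $\contextl = \opgmk$ satisfy $|\calb| \geq |\calcnr|$. I would then fix this $g$ and the corresponding $\contextl$ and apply Theorem~\ref{teorema1}: since $\cals$ is complete and finite, that theorem gives $|\B(\contextl)| \geq |\B(\context)| + |\calb| - |\calcnr|$, and substituting $|\calb| \geq |\calcnr|$ renders the correction term non-negative, whence $|\B(\contextl)| \geq |\B(\context)|$, which is the claim.

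I expect no computational obstacle; the one point requiring care — and the step I would flag as the crux — is coherence between the two theorems. The classes $\calb, \calcnr$ referenced in Theorem~\ref{teorema2} and in Theorem~\ref{teorema1} must be literally the same decomposition, which is what forces the use of a single $\cals$ carrying both structural properties and a single object $g$. Lemma~\ref{lemma_stability} is reassuring here, as it shows that $\calb \cup \calcnr$ does not depend on the choice of $\contextl$; nevertheless only the particular $g$ delivered by Theorem~\ref{teorema2} is guaranteed to split this fixed set so that $|\calb| \geq |\calcnr|$, so the existential quantifier in the corollary is inherited directly from the existential in Theorem~\ref{teorema2}.
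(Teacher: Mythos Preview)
Your proposal is correct and follows essentially the same approach as the paper's own proof: take a complete system of $\mic$-mixed generators with the semi-downset property via Proposition~\ref{prop-semidownset}, apply Theorem~\ref{teorema2} to obtain $g$ with $|\calb| \geq |\calcnr|$, and then invoke the final inequality of Theorem~\ref{teorema1}. Your added remarks on finiteness of $\cals$ and on using the same $\cals$ and $g$ in both theorems are accurate and make explicit what the paper leaves implicit.
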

\begin{proof}
  One takes a complete system of $\mic$-mixgens with the semi-downset property provided by Proposition~\ref{prop-semidownset} and applies Theorem~\ref{teorema2} in order to fulfill the sufficient condition guaranteed by Theorem~\ref{teorema1}.
\end{proof}

\section{Application}\label{section:app}

Let $a,b,c \in \mathbb{N}$. The contranominal scale of size $j$ will be denoted $\CN(j)$. We define $\calk_{c}^{a,b}$ as the set of all formal contexts having exactly $a$ objects, $b$ attributes and having no contranominal scale of size $c$ as a subcontext. We define $\cont(\context)$ to be the \emph{contrast} of $\context$, that is, the size of the largest contranominal scale (number of objects) that can be found as subcontext of $\context$. We say that $\CN(j)$ is a \emph{contranominal-summand} of $\context$ if $\context = \mathbb{L} + \CN(j)$. The largest natural number $j$ (possibly zero) such that $\CN(j)$ is a contranominal-summand of $\context$ will be denoted by $\cont^*(\context)$. Obviously, $\cont^*$ is a lower bound of $\cont$. The \emph{noncontranominal kernel} of $\context$ is $\contextl$ if $\context = \CN(j) + \contextl$ with $j = \cont^*(\context)$. It is unique\footnote{The noncontranominal kernel corresponds to the connected components of the hypergraph $\{\hic \mid h \in G\}$ which are not singletons.}. Notice that $\contextl$ may have no objects and/or no attributes. More precisely, if $\context$ is the apposition (subposition) of a contranominal scale and a full context, then $\contextl$ will have no objects (attributes). In particular, if $\context$ is a contranominal scale, its noncontranominal kernel $\contextl$ will not have any objects or attributes. Lastly, we define $ex(\calk_{c}^{a,b})$ to be the set of contexts in $\calk_{c}^{a,b}$ which have the maximum number of concepts (among all contexts in $\calk_{c}^{a,b}$).

\begin{theorem}
Amongst the contexts with $n$ objects, $m$ attributes and no contrano\-minal-scale of size $c \leq \min\{n,m\} + 1$, there exists one context with maximum number of concepts which has a contrano\-minal-scale of size $c-1$ as a subcontext.
\end{theorem}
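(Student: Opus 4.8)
The plan is to obtain the desired extremal context by a single optimization argument driven by Corollary~\ref{coro-answer}. Write $N=\max\{\,|\B(\context)| : \context\in\calk_c^{n,m}\,\}$ for the extremal concept number. The starting point is the decomposition $\opgm(\context)=\CN(1)+\context_{-g-m}$ (recorded when Questions~\ref{question_rich} and~\ref{question_op} were shown equivalent): the operation $op$ peels a contranominal-summand of size $1$ off a context, and by Corollary~\ref{coro-answer} it can be arranged so as not to decrease the number of concepts. I will also use that $\cont$ is additive over direct sums, $\cont(\context_1+\context_2)=\cont(\context_1)+\cont(\context_2)$ (a diagonal non-incidence of a contranominal scale cannot straddle the two summands, since all cross pairs are incident, so every scale splits into one scale per summand); correspondingly, splitting a $\CN(j)$ off one part and a $\CN(1)$ off the other merges into a $\CN(j+1)$-summand.

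First I would record the base estimate $N\ge 2^{c-1}$. Since $c\le\min\{n,m\}+1$ gives $c-1\le\min\{n,m\}$, the context $\CN(c-1)+\mathbf F$, where $\mathbf F$ is a full context on the remaining $n-(c-1)$ objects and $m-(c-1)$ attributes, lies in $\calk_c^{n,m}$ (its contrast is $c-1$, as $\cont(\mathbf F)=0$) and satisfies $|\B|=2^{c-1}$. Then I argue by contradiction: assume that \emph{no} extremal context contains $\CN(c-1)$, so every extremal context has contrast at most $c-2$. Among the finitely many extremal contexts choose $\context^*$ with $\cont^*(\context^*)$ maximal, put $j^*=\cont^*(\context^*)\le c-2$, and write $\context^*=\CN(j^*)+\contextl$ with $\contextl$ its noncontranominal kernel, so that $|\B(\context^*)|=2^{j^*}\,|\B(\contextl)|=N$.

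The heart of the argument is to increase $\cont^*$ while remaining in $\calk_c^{n,m}$, contradicting maximality. If $\contextl$ has an attribute $m$ that is not a full column of $\contextl$, apply Corollary~\ref{coro-answer} to $\contextl$: it produces $g\in\mic$ (an object of the kernel) with $|\B(\opgm(\contextl))|\ge|\B(\contextl)|$. Set $\context'=\CN(j^*)+\opgm(\contextl)=\CN(j^*+1)+(\contextl)_{-g-m}$; then $|\B(\context')|=2^{j^*}\,|\B(\opgm(\contextl))|\ge|\B(\context^*)|=N$ and $\cont^*(\context')\ge j^*+1$. The only thing to check is that $\context'$ stays $\CN(c)$-free, and this is precisely the main obstacle: in general $\cont(\opgm(\contextl))=1+\cont((\contextl)_{-g-m})$ could push the contrast of $\context'$ up to $c$. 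It is resolved by the contradiction hypothesis, which forces $\cont(\context^*)\le c-2$; since $\cont(\context^*)=j^*+\cont(\contextl)$ we get $\cont(\contextl)\le c-2-j^*$, whence $\cont(\context')\le(j^*+1)+\cont((\contextl)_{-g-m})\le(j^*+1)+(c-2-j^*)=c-1$. Thus $\context'\in\calk_c^{n,m}$, so $|\B(\context')|=N$ and $\context'$ is an extremal context with $\cont^*(\context')\ge j^*+1>j^*$, contradicting the choice of $\context^*$.

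It remains to dispose of the degenerate possibility that $\contextl$ has no non-full-column attribute (in particular if it has no attributes). Then every object of $\contextl$ has the same intent, so $|\B(\contextl)|=1$ and $|\B(\context^*)|=2^{j^*}\le 2^{c-2}<2^{c-1}\le N$, contradicting the extremality of $\context^*$. Since both cases are impossible, the assumption fails and some extremal context contains $\CN(c-1)$, as required. The genuinely delicate point, as flagged above, is the preservation of $\CN(c)$-freeness under $op$; the key realization that unlocks it is that the operation can create a forbidden $\CN(c)$ only when the contrast was already $c-1$, i.e.\ when a $\CN(c-1)$ was already present and there was nothing left to prove.
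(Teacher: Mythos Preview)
Your proof is correct and shares the same engine as the paper's---Corollary~\ref{coro-answer} applied to the noncontranominal kernel to push up the contranominal-summand size---but the packaging differs in a few notable ways. The paper iterates a nondeterministic operator $nop$ starting from an arbitrary extremal context and stops at the last iterate still in $\calk_c^{n,m}$; to control when the sequence exits the class it proves $\cont(\context)\le\cont(nop(\context))\le\cont(\context)+1$ by a case analysis on how the chosen pair $(g,m)$ meets a maximum contranominal scale. You instead argue by contradiction, maximizing $\cont^*$ among extremal contexts, and replace that case analysis by the cleaner observation that $\cont$ is additive over direct sums together with the trivial monotonicity $\cont((\contextl)_{-g-m})\le\cont(\contextl)$. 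You also handle the degenerate ``full kernel'' situation by the counting bound $N\ge 2^{c-1}>2^{j^*}$, whereas the paper removes an incidence there; your route avoids introducing a third move. In short: same idea, slightly tidier bookkeeping on your side, while the paper's version is more explicitly procedural.
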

\begin{proof} 
  For an arbitrary context $\context$, let $\mathbb{L}$ be its noncontranominal kernel. We define the following non-deterministic operation $nop(\mathbb{\context})$.
$$  
  \begin{aligned}
    nop(\context) = 
    \begin{cases}
      \context, \\
      \mbox{\quad \textbf{if} $\mathbb{L}$ has no objects or no attributes }\\
      op^{g,m}(\mathbb{K}), \mbox{ where $(g,m)$ is some non-incidence of $\mathbb{L}$, with $g$ as in Corollary~\ref{coro-answer}}\\
      \mbox{\quad \textbf{if} $\mathbb{L}$ has objects and attributes and some non-incidence} \\
      \mathbb{K} - (g,m),  \mbox{ where $(g,m)$ is any object/attribute pair belonging to $\mathbb{L}$,}\\ 
      \mbox{and $\context - (g,m)$ means the context $\context$ without the incidence $(g,m)$}\\
      \mbox{\quad \textbf{if} $\mathbb{L}$ has objects and attributes but no non-incidence ($\mathbb{L}$ is a full context)}\\
    \end{cases} 
  \end{aligned}
$$ 
  It should be clear that $nop(\context)$ has always at least as many concepts as $\context$. Moreover, $nop(\context)$ is either $\context$ itself or $nop$ increases $\cont^*$ by exactly one. We claim that 
$$\cont(\context) \leq \cont(nop(\context)) \leq \cont(\context) + 1.$$
Indeed, consider a maximum contranominal scale $\context_1 = \CN(\cont) \leq \context$. We may suppose $nop(\context) \neq \context$, thus, a pair $(g,m)$ was chosen by the operation $nop$. If both $g$ and $m$ are inside $\context_1$, then clearly $\context_1 \leq nop(\context)$. If neither $g$ nor $m$ are inside $\context_1$, then obviously $\CN(\cont+1) \leq nop(\context)$. If only one among $g$ and $m$ are inside $\context_1$, then there exists a contranominal scale of size $\cont - 1$ with neither $g$ nor $m$. Hence, $g$ and $m$ will, together, belong to a $\CN(\cont)$ in $nop(\context)$. This proves the lower bound $\cont(nop(\context)) \geq \cont(\context)$. The upper bound is clear.

  Let $\context \in ex(\calk_{c}^{a,b})$. Consider the nondeterministic sequence
$$(\context, nop(\context), nop(nop(\context)), nop(nop(nop(\context))), ... ) $$
The last context in the sequence above that still belongs to $\calk_c^{a,b}$ must have $\CN(c-1)$ as a subcontext. Its extremality follows from the extremality of $\context$. 
\end{proof}

A natural way of attacking this question aiming to prove the full conjecture would be trying to prove that a ``smart'' choice of $g$ done by Corollary~\ref{coro-answer} actually achieves $|\B(\opgmk)| > |\B(\context)|$. The context depicted in Figure~\ref{fig_resistente}, though, is ``resistant'' to such idea. Actually, for \textbf{any} choice of $m$ \textbf{and} $g$, the resulting context $\opgmk$ is such that $|\B(\opgmk)| = |\B(\context)| = 22$ ({\sc ConExp} or similar comes in handy).
\begin{figure}[htbp]
  \begin{center}
  \begin{cxt}                                                               
    \cxtName{}                                                   
    \att{$a$}                                                                  
    \att{$b$}                                                                  
    \att{$c$}                                                        
    \att{$d$}
    \att{$e$}
    \att{$f$}
    \obj{..xxxx}{1}                                                          
    \obj{xx..xx}{2}                                                      
    \obj{xxxx..}{3}                                                         
    \obj{.x.x.x}{4}                                                          
    \obj{x.x.x.}{5}                                                    
  \end{cxt}     
  \end{center}
  \caption{A context for which one operation $\opgm$ is not able to increase the number of concepts, independent of the choice of $g$ and $m$.}
  \label{fig_resistente}
\end{figure}







\end{document}